\tikzstyle{vertex}=[circle, draw, inner sep=0pt, minimum size=4pt]
\tikzstyle{vtx}=[circle, draw, inner sep=0pt, minimum size=8pt]
\definecolor{darkgreen}{cmyk}{.9,0,.9,.2}
\definecolor{midgray}{gray}{0.60}
\definecolor{lightgray}{gray}{0.90}
\definecolor{lmgray}{gray}{0.70}
\def\a{\alpha}
\def\NN{{\mathbb N}}
\def\a{\alpha}
\def\w{\varpi}
\def\sgn{\text{sgn}}
\def\P{a_0\wedge b_0}
\def\notP{a_1\vee b_1}
\def\Q{c_0\wedge b_0}
\def\notQ{c_1\vee b_1}
\def\R{a_0\wedge d_0}
\def\notR{a_1\vee d_1}
\def\S{c_0\wedge e_0}
\def\notS{c_1\vee e_1}
\def\T{f_0\wedge d_0}
\def\notT{f_1\vee d_1}
\newcommand{\state}[5]{(#1)\wedge (#2)\wedge(#3)\wedge (#4)\wedge (#5)}
\newcommand{\RN}[1]
    {\MakeUppercase{\romannumeral #1}}
\def\sp{\mathfrak{sp}}
\newtheorem*{rep@theorem}{\rep@title}
\newcommand{\newreptheorem}[2]{%
\newenvironment{rep#1}[1]{%
 \def\rep@title{#2 \ref{##1}}%
 \begin{rep@theorem}}%
 {\end{rep@theorem}}}
\newtheorem*{rep@proposition}{\rep@title}
\newcommand{\newrepproposition}[2]{%
\newenvironment{rep#1}[1]{%
 \def\rep@title{#2 \ref{##1}}%
 \begin{rep@proposition}}%
 {\end{rep@proposition}}}
\newcommand{\addresseshere}{%
  \enddoc@text\let\enddoc@text\relax
}
\theoremstyle{definition}
\newtheorem{definition}{Definition}
\newtheorem{example}{Example}
\newtheorem{theorem}{Theorem}[section]
\newtheorem{proposition}{Proposition}[section] 
\newtheorem{lemma}{Lemma}[section] 
\newtheorem{corollary}{Corollary}[section]
\newcommand{\floor}[1]{\left\lfloor #1 \right\rfloor}
\title{Weight $q$-multiplicities for representations of the exceptional Lie algebra $\mathfrak{g}_2$}
\author{Jerrell Cockerham}
\address[Jerrell Cockerham]{Department of Mathematics and Computer Science, 
Colorado College, United States}\email{\textcolor{blue}{\href{mailto:jerrell.cockerham@gmail.com }{jerrell.cockerham@gmail.com }}}
\author{Melissa Guti\'errez Gonz\'alez}
\address[Melissa Guti\'errez Gonz\'alez]{Department of Mathematics,
Occidental College, United States}
\email{\textcolor{blue}{\href{mailto:mgutierrezgo@oxy.edu}{mgutierrezgo@oxy.edu}}}
\author{Pamela E. Harris}
\address[Pamela E. Harris]{Department of Mathematics and Statistics,
Williams College, United States}
\email{\textcolor{blue}{\href{mailto:peh2@williams.edu}{peh2@williams.edu}}}
\author{Marissa Loving}
\address[Marissa Loving]{School of Mathematics, Georgia Tech, United States}
\email{\textcolor{blue}{\href{mailto:mloving6@gatech.edu}{mloving6@gatech.edu}}}
\author{Amaury V. Mini\~no}
\address[Amaury V. Mini\~no]{Department of Mathematical Sciences,
Florida Atlantic University, United States}
\email{\textcolor{blue}{\href{mailto:aminino2017@fau.edu}{aminino2017@fau.edu}}}
\author{Joseph Rennie}
\address[Joseph Rennie]{Department of Mathematics, University of Illinois at Urbana-Champaign, United States}
\email{\textcolor{blue}{\href{mailto:rennie2@illinois.edu}{rennie2@illinois.edu}}}
\author{Gordon Rojas Kirby}
\address[Gordon Rojas Kirby]{Department of Mathematics, University of California, Santa Barbara, United States}
\email{\textcolor{blue}{\href{mailto:gkirby@math.ucsb.edu}{gkirby@math.ucsb.edu}}}
\subjclass[2010]{17B10}
\keywords{$q$-analog of Kostant's partition function; $q$-weight multiplicities; exceptional Lie algebra~$\mathfrak{g}_2$}
\date{\today}
\begin{document}

\maketitle

\begin{abstract}
Given a simple Lie algebra $\mathfrak{g}$, Kostant's weight $q$-multiplicity formula is an alternating sum over the Weyl group whose terms involve the $q$-analog of Kostant's partition function. For $\xi$ (a weight of $\mathfrak{g}$), the $q$-analog of Kostant's partition function is a polynomial-valued function defined
by $\wp_q(\xi)=\sum c_i q^i$ where $c_i$ is the number of ways $\xi$ can be written as a sum of $i$ positive roots of $\mathfrak{g}$.  
In this way, the evaluation of Kostant's weight $q$-multiplicity formula at $q = 1$ recovers the multiplicity of a weight in a highest weight representation of $\mathfrak{g}$.
In this paper, we give closed formulas for computing weight $q$-multiplicities in a highest weight representation of the exceptional Lie algebra  $\mathfrak{g}_2$. 
\end{abstract}

\section{Introduction}
We recall that the theorem of the highest weight asserts that a finite-dimensional complex irreducible representation of 
a simple Lie algebra $\mathfrak{g}$ is  equivalent to $L(\lambda)$, a highest weight representation with dominant integral highest weight $\lambda$.
The multiplicity of a weight $\mu$ in $L(\lambda)$, denoted by $m(\lambda,\mu)$, can be computed using Kostant's weight multiplicity formula (as defined by Kostant in \cite{Kostant}):
\begin{equation}
m(\lambda,\mu) = \sum_{\sigma\in W}^{} (-1)^{\ell(\sigma)}\wp(\sigma(\lambda + \rho) - (\mu + \rho))\label{eq:KWMF}
\end{equation} 
where $W$ is the Weyl group of $\mathfrak{g}$, $\ell(\sigma)$ denotes the length of $\sigma\in W$, and $\rho = \frac{1}{2}\sum_{\alpha\in\Phi^+}^{} \alpha$ with $\Phi^+$ being the set of positive roots of $\mathfrak{g}$, and where
$\wp$ denotes Kostant's partition function, which counts the number of ways to express a weight as a nonnegative integral sum of positive roots.

In this paper, we consider the exceptional Lie algebra $\mathfrak{g}_2$
and study the $q$-analog of Kostant's weight multiplicity formula, also known as Kostant's weight $q$-multiplicity formula,  which is a generalization of equation \eqref{eq:KWMF} defined by Luztig in \cite{Lusztig}:
\begin{equation}
m_q(\lambda,\mu) = \sum_{\sigma\in W}^{} (-1)^{\ell(\sigma)}\wp_q(\sigma(\lambda + \rho) - (\mu + \rho)).\label{eq:qKWMF}
\end{equation}
In equation \eqref{eq:qKWMF},   $\wp_q$ denotes the $q$-analog of Kostant's partition function, which is a polynomial-valued function defined by
\begin{align}\wp_q(\xi)&=c_0+c_1q + c_2q^2 + \cdots+ c_nq^n,\label{eq:qKPF} \end{align}
where $c_i$ denotes the number of ways to express the weight $\xi$ as a sum of exactly $i$ positive roots. Note that equation \eqref{eq:qKWMF} generalizes \eqref{eq:KWMF} since $\wp_q(\xi)|_{q=1}=\wp(\xi)$ for any weight $\xi$ and so  $m_q(\lambda,\mu)|_{q=1}=m(\lambda,\mu)$. One important application of equation \eqref{eq:qKWMF} is the celebrated result of Lusztig ~\cite[Section 10, p.~226]{Lusztig}, which states that if $\mathfrak{g}$ is a finite-dimensional simple Lie algebra $\mathfrak{g}$ and $\tilde{\alpha}$ is the highest root, then $m_q(\tilde{\alpha},0)=q^{e_1}+q^{e_2}+\cdots+q^{e_r}$ where $e_1,e_2,\ldots,e_r$ are the exponents of $\mathfrak{g}$.  In the case of the exceptional Lie algebra  $\mathfrak{g}_2$, this implies that $m_q(\tilde{\alpha},0)=q+q^5$.

Although formulas such as equation \eqref{eq:KWMF} and \eqref{eq:qKWMF} exist, it is very difficult to give closed formulas for weight multiplicities for a Lie algebra  of arbitrary rank. The difficulties in this work arise from both the lack of closed formulas for the partition functions involved, as well as the factorial growth of the Weyl group order as the rank of the the Lie algebra increases. For some results related to computations of weight multiplicities in certain highest weight representations see \cite{CHI2020,H2011,PH2017,HIS2018,HIW2016,HRSS2019}. In general, there has been some success in providing closed formulas for weight $q$-multiplicities for Lie algebras of low rank. This includes the work of Harris and Lauber \cite{harris2017weight} on  weight $q$-multiplicities for the representations of $\mathfrak{sp}_{4}(\mathbb{C})$, which generalized the the work of Refaghat and Shahryari \cite{RS}, and the work of Garcia, Harris, Loving, Martinez, Melendez, Rennie, Rojas Kirby, and Tinoco \cite{LucyDavid2020} on  weight $q$-multiplicities for $\mathfrak{sl}_4(\mathbb{C})$. Other work provides visualizations of the subsets of elements of the Weyl group which contribute non-trivially to the associated weight multiplicity, for examples see \cite{HLM2017,rank2-2019}. Motivated by these works, we present a new formula for equation \eqref{eq:qKWMF} giving weight $q$-multiplicities for representations of the exceptional Lie algebra $\mathfrak{g}_2$.
\begin{theorem}\label{thm:main}
Let $\varpi_1$ and $\varpi_2$ denote the fundamental weights of $\mathfrak{g}_2$. If $\lambda = m\varpi_1 + n\varpi_2$, $\mu = x\varpi_1 + y\varpi_2$, and $m, n, x, y \in \NN:=\{0,1,2,3,\ldots\}$, then 

\begin{equation}\label{multiplicity formula piecewise}
      m_{q}(\lambda,\mu)=\begin{cases} 
      P-Q-R+S+T & \text{if and only if}  \; \; a,b,c,d,e,f\in \NN, \\
      P-Q-R+S & \text{if and only if} \; \; a,b,c,d,e\in \NN, f \not \in \NN, \\
      P-Q-R+T & \text{if and only if} \; \; a,b,c,d,f\in \NN, e\not \in \NN, \\
      P-Q-R & \text{if and only if} \; \; a,b,c,d\in \NN, e,f\not \in \NN, \\
      P-Q & \text{if and only if}  \; \; a,b,c\in \NN, d,e,f\not \in \NN, \\
      P-R & \text{if and only if}  \; \; a,b,d\in \NN, c,e,f \not \in \NN, \\
      P & \text{if and only if} \; \; a,b\in \NN, c,d,e,f\not \in \NN, \\
      0 & \text{otherwise} \\
  \end{cases}
   \end{equation}
where
\begin{align}\label{PQRST evaluations}
    \begin{split}
     P &= 
        \wp_q((2m +3n -2x -3y)\alpha_1 + (m +2n -x -2y)\alpha_2),
    \\
    Q &= \wp_q((m +3n -2x -3y -1)\alpha_1 + (m +2n -x -2y)\alpha_2),
     \\
    R &= 
    \wp_q((2m +3n -2x -3y)\alpha_1 + (m +n -x -2y -1)\alpha_2),
    \\
     S &= 
     \wp_q((m +3n -2x -3y -1)\alpha_1 + {(n-x-2y-2)}\alpha_2),\mbox{ and}
     \\
     T &= \wp_q({(m-2x-3y -4)}\alpha_1 + {(m +n -x -2y -1)}\alpha_2).
     \end{split}
\end{align}
\end{theorem}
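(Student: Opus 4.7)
The plan is to apply Kostant's weight $q$-multiplicity formula from equation \eqref{eq:qKWMF} directly by summing over all twelve elements of the Weyl group $W$ of $\mathfrak{g}_2$, and then to argue that only the five elements $e$, $s_1$, $s_2$, $s_2 s_1$, $s_1 s_2$ can contribute a nonzero term. I would begin by enumerating the twelve elements of $W$ (the dihedral group of order $12$ generated by the simple reflections $s_1, s_2$) together with their lengths, and computing $\sigma(\lambda+\rho) - (\mu+\rho)$ for each $\sigma$. Starting from $\lambda + \rho = (m+1)\varpi_1 + (n+1)\varpi_2$ and $\mu + \rho = (x+1)\varpi_1 + (y+1)\varpi_2$, the basic relations $s_i \varpi_j = \varpi_j - \delta_{ij}\alpha_i$, $s_1 \alpha_2 = 3\alpha_1 + \alpha_2$, and $s_2 \alpha_1 = \alpha_1 + \alpha_2$ allow one to compute each shift iteratively; converting to the simple root basis via $\varpi_1 = 2\alpha_1 + \alpha_2$ and $\varpi_2 = 3\alpha_1 + 2\alpha_2$ then writes each shift as $c_1(\sigma)\alpha_1 + c_2(\sigma)\alpha_2$.

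The next step uses the observation that a weight $c_1 \alpha_1 + c_2 \alpha_2$ is expressible as a nonnegative integer combination of positive roots of $\mathfrak{g}_2$ if and only if $c_1, c_2 \in \NN$: necessity holds because every positive root of $\mathfrak{g}_2$ lies in $\NN \alpha_1 + \NN \alpha_2$, and sufficiency follows by taking $c_1$ copies of $\alpha_1$ and $c_2$ copies of $\alpha_2$. Hence $\wp_q(c_1 \alpha_1 + c_2 \alpha_2) \neq 0$ precisely when $c_1, c_2 \in \NN$. Routine computation should confirm that $e, s_1, s_2, s_2 s_1, s_1 s_2$ produce exactly the shifts appearing in $P, Q, R, S, T$, with signs $+, -, -, +, +$ matching $(-1)^{\ell(\sigma)}$, while each of the seven remaining elements (of length at least $3$) yields a shift with at least one strictly negative simple-root coefficient whenever $m, n, x, y \in \NN$. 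For the longest element $w_0 = -\mathrm{id}$ this is immediate, since the shift equals $-(\lambda + \mu + 2\rho)$; for the other six it follows by direct inspection. For example, $s_1 s_2 s_1$ yields $\alpha_1$-coefficient $-(m + 2x + 3y + 6)$ and $s_2 s_1 s_2$ yields $\alpha_2$-coefficient $-(n + x + 2y + 4)$, both manifestly negative.

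With the Kostant sum reduced to at most the five terms $P, Q, R, S, T$, the final step is to identify the parameters $a, b, c, d, e, f$ with the six distinct coefficients of $\alpha_1$ and $\alpha_2$ among $P, Q, R, S, T$; there are exactly six because $P$ and $R$ share an $\alpha_1$-coefficient, $P$ and $Q$ share an $\alpha_2$-coefficient, $Q$ and $S$ share an $\alpha_1$-coefficient, and $R$ and $T$ share an $\alpha_2$-coefficient. Each term is then nonzero precisely when both of its relevant coefficients lie in $\NN$, and the piecewise formula \eqref{multiplicity formula piecewise} records the resulting possibilities.

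The main obstacle is verifying that the seven enumerated cases are exhaustive, i.e., that apparently missing combinations of sign conditions never occur. This reduces to the monotonicity relations $d - e = m + 1 \geq 1$ and $c - f = 3(n+1) \geq 3$ (so that $e \in \NN \Rightarrow d \in \NN$ and $f \in \NN \Rightarrow c \in \NN$), together with the stronger implications that $e \in \NN$ or $f \in \NN$ each force all of $a, b, c, d \in \NN$, and the complementary observation that if $a \notin \NN$ or $b \notin \NN$ then all five of $P, Q, R, S, T$ vanish (giving the ``otherwise'' case $m_q(\lambda,\mu) = 0$). Each of these implications is a one-line inequality check using $m, n, x, y \in \NN$, and together they account for every seemingly missing configuration.
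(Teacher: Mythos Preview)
Your approach is correct and follows the same overall outline as the paper---compute all twelve shifts, observe that the seven elements of length $\geq 3$ always produce a negative simple-root coefficient, and then determine which subsets of $\{P,Q,R,S,T\}$ can be simultaneously nonzero---but your final step is organized more efficiently than the paper's.

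The paper eliminates the $24$ forbidden subsets of $\{P,Q,R,S,T\}$ one at a time: eleven via direct Boolean contradictions (Lemma~\ref{contradictions to 21 possible cases}) and thirteen via four auxiliary inequality arguments labeled Cases A--D (Lemma~\ref{8 non trivial combos given by inequalites}); it then treats the eight surviving cases with two further auxiliary contradictions (Cases E, F) and a table of explicit examples. You instead exploit the monotonicity chains
\[
a - c = d - e = m+1,\qquad b - d = n+1,\qquad c - f = 3(n+1),
\]
together with the cross-implications $e\in\NN\Rightarrow c\in\NN$ and $f\in\NN\Rightarrow d\in\NN$ (each a one-line check, e.g.\ $d-f = n+x+y+3\ge 3$). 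These force the sign pattern of $(a,b,c,d,e,f)$ to lie in exactly the eight listed configurations, and the piecewise formula follows at once. Your packaging is shorter and more conceptual; the paper's exhaustive case tables have the compensating virtue of providing explicit witnesses $(m,n,x,y)$ showing each of the eight cases is nonempty, a point you do not address (though it is not strictly required for the biconditional as stated).
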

In general, using equation \eqref{eq:qKWMF} to compute weight $q$-multiplicities for representations of $\mathfrak{g}_2$ requires the computation of Kostant's partition function on 12 distinct inputs, as the Weyl group of $\mathfrak{g}_2$ is isomorphic to the dihedral group of order 12. However, Theorem~\ref{thm:main} reduces all weight $q$-multiplicity computations to at most five such computations. Our second result, provides a formula for the $q$-analog of Kostant's partition function for $\mathfrak{g}_2$, which can be used to compute each of the terms appearing in Theorem \ref{thm:main}.
\begin{proposition}\label{prop:G2qpartition}
If $m,n \in \NN$, then the value of $\wp_q(m\alpha_1 + n\alpha_2)$ is given by 
\begin{align}
        \sum_{i=0}^{\min \left( \lfloor \frac{m}{3} \rfloor , \floor{\frac{n}{2}} \right)} 
        \left( \sum_{j=0}^{\min \left( \floor{\frac{m-3i}{3}} , n-2i \right)} 
        \left( \sum_{k=0}^{\min \left( \lfloor \frac{m-3i-3j}{2} \rfloor , n-2i-j \right)} 
        \left( \sum_{l=0}^{\min \left( m-3i-3j-2k , n-2i-j-k \right)} q^{z}  \right) \right) \right),
\end{align}
where $z=m+n-4i-3j-2k-l$.
\end{proposition}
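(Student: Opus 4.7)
The plan is to parameterize every decomposition of $m\alpha_1 + n\alpha_2$ as a sum of positive roots of $\mathfrak{g}_2$, track the total number of positive roots used as the exponent of $q$, and reorganize the count as a nested sum. First I will recall that the positive roots of $\mathfrak{g}_2$ are
\[
\alpha_1,\ \alpha_2,\ \alpha_1+\alpha_2,\ 2\alpha_1+\alpha_2,\ 3\alpha_1+\alpha_2,\ 3\alpha_1+2\alpha_2,
\]
so any expression of $m\alpha_1+n\alpha_2$ as a sum of $N$ positive roots is specified by a tuple $(a,b,c,d,e,f)\in\NN^6$ recording the multiplicities of these six roots, subject to the two linear constraints
\[
a+c+2d+3e+3f=m,\qquad b+c+d+e+2f=n,
\]
with $N=a+b+c+d+e+f$. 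By definition of $\wp_q$, we then have
\[
\wp_q(m\alpha_1+n\alpha_2)=\sum_{(a,b,c,d,e,f)} q^{a+b+c+d+e+f},
\]
the sum ranging over all $(a,b,c,d,e,f)\in\NN^6$ satisfying the two constraints.

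Next I will solve the constraints for $a$ and $b$ in terms of the four free parameters $(f,e,d,c)$, obtaining $a=m-3f-3e-2d-c$ and $b=n-2f-e-d-c$. Substituting these into $N$ yields
\[
N=m+n-4f-3e-2d-c,
\]
which matches the desired exponent $z$ under the identification $i=f$, $j=e$, $k=d$, $l=c$. Thus the $q$-weight on each term is explained.

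Then I will translate the nonnegativity conditions $a,b,c,d,e,f\geq 0$ into the iterated bounds on $(i,j,k,l)$. Summing over $i=f$ first, nonnegativity of $a$ forces $i\leq \lfloor m/3\rfloor$ and nonnegativity of $b$ forces $i\leq \lfloor n/2\rfloor$; holding $i$ fixed and summing over $j=e$ gives $j\leq \lfloor (m-3i)/3\rfloor$ and $j\leq n-2i$; next fixing $j$ and summing over $k=d$ gives $k\leq \lfloor (m-3i-3j)/2\rfloor$ and $k\leq n-2i-j$; and finally fixing $k$ and summing over $l=c$ gives $l\leq m-3i-3j-2k$ and $l\leq n-2i-j-k$. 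Each of the four pairs of upper bounds combines into the minimum appearing in the proposition, so the quadruply nested sum is precisely the sum over all valid $(a,b,c,d,e,f)$.

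The argument is essentially bookkeeping and the main pitfall is arranging the order of summation so that the upper limits at each stage are expressible purely in the already-chosen variables. Putting the root $3\alpha_1+2\alpha_2$ first is what makes the outermost bound depend only on $m$ and $n$; the analogous choice at each subsequent stage (taking $3\alpha_1+\alpha_2$, then $2\alpha_1+\alpha_2$, then $\alpha_1+\alpha_2$) keeps the bounds explicit and recovers exactly the expression stated in the proposition.
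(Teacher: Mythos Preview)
Your proof is correct and follows essentially the same approach as the paper: both parameterize a partition by the multiplicities of the four non-simple positive roots (in the order $3\alpha_1+2\alpha_2$, $3\alpha_1+\alpha_2$, $2\alpha_1+\alpha_2$, $\alpha_1+\alpha_2$), observe that the multiplicities of $\alpha_1$ and $\alpha_2$ are then forced, and read off the iterated bounds from nonnegativity. Your presentation is slightly more algebraic (explicitly solving the linear constraints for $a$ and $b$), while the paper phrases it as iteratively subtracting roots, but the content is identical.
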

\textbf{Outline of the paper.}
Section \ref{sec:background} provides the Lie theoretic background needed for the remainder of the manuscript. Section \ref{sec:qpartition} contains the proof of Proposition \ref{prop:G2qpartition}. We prove Theorem \ref{thm:main} in Section~\ref{qKWMF} and provide some detailed examples of how Theorem \ref{thm:main} can be used to compute weight $q$-multiplicities for representations of $\mathfrak{g}_2$. In Section \ref{sec:correction}, we provide a missing case in the proof of a formula of Harris and Lauber for the $q$-analog of Kostant's partition function of the Lie algebra $\mathfrak{sp}_4(\mathbb{C})$ appearing in~\cite{harris2017weight}. We end the manuscript with a section containing some open problems.

\section{Background}\label{sec:background}
We use the same notation as appearing in \cite{GW}, which the reader can look to for a more comprehensive treatment of some of the objects introduced here. We denote the simple roots of $\mathfrak{g}_2$ as $\alpha_1$ and $\alpha_2$, and the fundamental weights as $\varpi_1$ and $\varpi_2$. The positive roots of $\mathfrak{g}_2$ are given by \[\Phi^{+} = \{\alpha_1, \alpha_2,\alpha_1 + \alpha_2, 2\alpha_1 + \alpha_2, 3\alpha_1 + \alpha_2, 3\alpha_1 + 2\alpha_2\}.\] Recall that $\varpi_1 = 2 \alpha_1 +  \alpha_2$, $\varpi_2 = 3 \alpha_1 + 2 \alpha_2,$ and 
\begin{equation}\label{rho}
    \rho = \frac{1}{2} \sum_{\alpha \in \Phi^{+}} \alpha=\varpi_1+\varpi_2=5\alpha_1+3\alpha_2.
\end{equation} 
We set $\lambda = (2m+3n)\alpha_1 + (m+2n)\alpha_2$ and $\mu = (2x+3y)\alpha_1 + (x+2y)\alpha_2$ where $m, n, x, y \in \NN$. We make this choice to simplify our computations and we are able to do so since the fundamental weight lattice and the root lattice of $\mathfrak{g}_2$ are equal.

The Weyl group of $\mathfrak{g_2}$, denoted $W$, is generated by reflections about hyperplanes orthogonal to the simple roots. We denote the reflection through the hyperplane orthogonal to $\alpha_i$ by $s_i$ for $i = 1, 2$. In Figure \ref{root system}, we illustrate the positive roots and in red we present the hyperplanes defining the reflections $s_1$ and $s_2$. The action of the generators of $W$ on the simple roots is given by
\begin{align}
    s_1(\alpha_1) & = -\alpha_1, & & s_1(\alpha_2) = 3\alpha_1 + \alpha_2,\label{alpha1} \\
    s_2(\alpha_1) & = \alpha_1 + \alpha_2, & & s_2(\alpha_2) = -\alpha_2 \label{alpha2}.
\end{align} 
Table \ref{elements of weyl group} describes how the remaining elements of $W$ act on the simple roots. 
\begin{figure}[htb!]
\centering
    \begin{tikzpicture}
        \draw[red, very thick] (90:-3) to (90:3.5);
        \draw[red, very thick] (60:-3) to (60:3.5);
        \foreach\ang in {60,120,...,360}{
         \draw[->,blue!80!black,thick] (0,0) -- (\ang:1.5cm);
        }
        \foreach\ang in {30,90,...,330}{
         \draw[->,blue!80!black,thick] (0,0) -- (\ang:2.5cm);
        }
        \node[anchor=north west,scale=1] at (-2.8, 1.6) {$\alpha_{2}$};
        \node[anchor=south west,scale=1] at (1.5,-.25) {$\alpha_{1}$};
        \node[anchor=north,scale=1] at (-1, 2.6) {$3\alpha_1 + 2\alpha_2$};
        \node[anchor=north east,scale=1] at (1.8, 1.8) {$2\alpha_1 + \alpha_2$};
        \node[anchor=north west,scale=1] at (-1.7, 1.8) {$\alpha_1 + \alpha_2$};
        \node[anchor=north east,scale=1] at (3.9, 1.6) {$3\alpha_1 + \alpha_2$};
        \node[anchor=north,scale=1] at (0, 4) {$s_1$};
        \node[anchor=north east,scale=1] at (2.2, 3.5) {$s_2$};
    \end{tikzpicture}
    \caption{Positive root system for $\mathfrak{g_2}$ and the lines orthogonal to the simple roots which define $s_1$ and $s_2$.}\label{root system}
\end{figure}
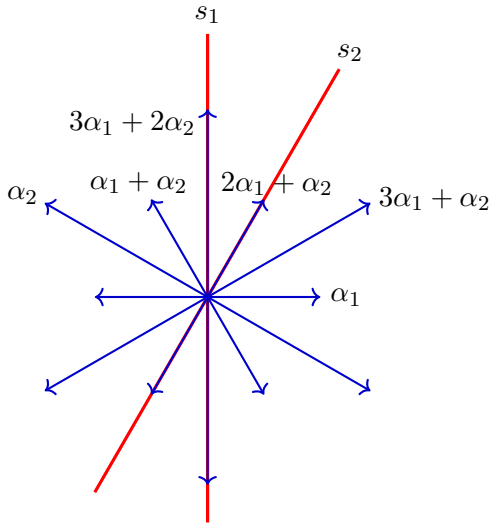

\setlength\extrarowheight{1pt}
\begin{table}[htb!]
    \centering
    \begin{tabular}{| c || c | c | c | c | c | c |}
        \hline
        $\sigma \in W$ & $1$ & $s_1$ & $s_2s_1$ & $s_1s_2s_1$ & $(s_2s_1)^2$ & $s_1(s_2s_1)^2$ \\
        \hline
        $\sigma(\alpha_1)$ & $\alpha_1$ & $-\alpha_1$ & $-(\alpha_1 + \alpha_2)$ & $-(2\alpha_1 + \alpha_2)$ & $-(2\alpha_1 + \alpha_2)$ & $-(\alpha_1+\alpha_2)$\\
        
        \hline
       $\sigma(\alpha_2)$ & $\alpha_2$ & $3\alpha_1 + \alpha_2$ & $3\alpha_1 + 2\alpha_2$ & $3\alpha_1 + 2\alpha_2$ & $3\alpha_1 + \alpha_2$ & $\alpha_2$ \\
        \hline 
        \hline 
        $\sigma \in W$ & $s_2$ & $s_1s_2$ & $s_2s_1 s_2$ & $(s_1s_2)^2$ & $s_2 (s_1 s_2)^2$ & $(s_1 s_2)^3$\\
        \hline
        $\sigma(\alpha_1)$ & $\alpha_1 + \alpha_2$ & $2\alpha_1 + \alpha_2$ & $2\alpha_1 + \alpha_2$ & $\alpha_1 + \alpha_2$ & $\alpha_1$ & $-\alpha_1$ \\
        \hline
        $\sigma(\alpha_2)$ & $-\alpha_2$ & $-(3\alpha_1 + \alpha_2)$ & $-(3\alpha_1 + 2\alpha_2)$ & $-(3\alpha_1 + 2\alpha_2)$ & $-(3\alpha_1 + \alpha_2)$ & $-\alpha_2$ \\
        \hline
    \end{tabular}
    \caption{Elements of $W$ and their action on the simple roots $\alpha_1$ and $\alpha_2$.}
    \label{elements of weyl group}
\end{table}

\section{The \texorpdfstring{$q$}{q}-analog of Kostant's Partition Function}
\label{sec:qpartition}
\setlength\extrarowheight{0pt}

In this section, we provide a closed formula for the $q$-analog of Kostant's partition function for the exceptional Lie algebra $\mathfrak{g}_2$, which was presented in equation \eqref{eq:qKPF}. We restate the result below for ease of reference.
\begin{repproposition}{prop:G2qpartition}
\emph{If $m,n \in \NN$, then the value of $\wp_q(m\alpha_1 + n\alpha_2)$ is given by }
\begin{align}
        \sum_{i=0}^{\min \left( \lfloor \frac{m}{3} \rfloor , \floor{\frac{n}{2}} \right)} 
        \left( \sum_{j=0}^{\min \left( \floor{\frac{m-3i}{3}} , n-2i \right)} 
        \left( \sum_{k=0}^{\min \left( \lfloor \frac{m-3i-3j}{2} \rfloor , n-2i-j \right)} 
        \left( \sum_{l=0}^{\min \left( m-3i-3j-2k , n-2i-j-k \right)} q^{z}  \right) \right) \right),
\end{align}
\emph{where} $z=m+n-4i-3j-2k-l$.
\end{repproposition}
\begin{proof}
The number of ways we can write $m\alpha_1 + n\alpha_2$ as a nonnegative integral sum of positive roots is determined by the number of times each positive root in \[\Phi^{+} = \{\alpha_1, \alpha_2,\alpha_1 + \alpha_2, 2\alpha_1 + \alpha_2, 3\alpha_1 + \alpha_2, 3\alpha_1 + 2\alpha_2\}\] is used. 

If a partition includes $i$ multiples of the highest root $3\alpha_1 + 2\alpha_2$, then $0 \leq i \leq \min(\lfloor \frac{m}{3} \rfloor, \lfloor \frac{n}{2} \rfloor) $, so as to not exceed each coefficient of the weight $m\alpha_1 + n\alpha_2$ for $m$ and $n$. 
We are now left to partition $m\alpha_1 + n\alpha_2 - i(3\alpha_1 + 2\alpha_2) = (m - 3i)\alpha_1 + (n - 2i)\alpha_2$. 
If the partition of  $(m - 3i)\alpha_1 + (n - 2i)\alpha_2$ includes $j$ multiples of the root $3\alpha_1 + \alpha_2$, then $0 \leq j \leq \min( \left \lfloor \frac{m-3i}{3} \right \rfloor, n-2i)$. 
In which case, we must partition $(m - 3i)\alpha_1 + (n - 2i)\alpha_2 - j(3\alpha_1 + \alpha_1) = (m - 3i - 3j)\alpha_1 + (n - 2i - j)\alpha_2$. If the partition of $(m - 3i - 3j)\alpha_1 + (n - 2i - j)\alpha_2$ includes $k$ multiples of the root $2\alpha_1 + \alpha_1$, then $0 \leq k \leq \min( \lfloor \frac{m-3i-3j}{2} \rfloor, n-2i-j)$. We must now partition
$(m - 3i - 3j)\alpha_1 + (n - 2i - j)\alpha_2 - k(2\alpha_1 + \alpha_2) = (m - 3i - 3j - 2k)\alpha_1 + (n - 2i - j - k)\alpha_2$. 
If the partition of $(m - 3i - 3j - 2k)\alpha_1 + (n - 2i - j - k)\alpha_2$ includes $l$ multiples of $\alpha_1 + \alpha_2$, then $0 \leq l \leq \min(m - 3i - 3j -2k, n - 2i - j - k)$. 
We are left to partition $(m - 3i - 3j - 2k)\alpha_1 + (n - 2i - j - k)\alpha_2 - l(\alpha_1 + \alpha_2) = (m - 3i - 3j - 2k - l)\alpha_1 + (n - 2i - j - k - l)\alpha_2$. Finally, the coefficients of $\alpha_1$ or $\alpha_2$ in our partition are determined by our choice of $i, j, k, l$ and are $m - 3i - 3j - 2k - l$ and $n - 2i - j - k - l$, respectively.

It follows that the total number of roots used is given by $z = i + j + k + l + (m - 3i - 3j - 2k - l) + (n - 2i - j - k - l) = m + n - 4i - 3j - 2k - l$.
\end{proof}
With the formula of Proposition \ref{prop:G2qpartition} at hand, next we compute the values of $\sigma(\lambda+\rho)-(\mu+\rho)$ as they appear in \eqref{eq:qKWMF} for each $\sigma \in W$. Recall that $\lambda = (2m+3n)\alpha_1 + (m+2n)\alpha_2$ and $\mu = (2x+3y)\alpha_1 + (x+2y)\alpha_2$, where $m, n, x, y \in \NN$. To illustrate the computations, we consider the case when $\sigma = s_1$, and using equations \eqref{rho}, \eqref{alpha1}, and \eqref{alpha2}, we find that
\begin{align*}
    s_1(\lambda +\rho) & -(\mu+\rho) \\
    &= s_1((2m+3n)\alpha_1 + (m+2n)\alpha_2 + 5\alpha_1 + 3\alpha_2) - ((2x+3y)\alpha_1 + (x+2y)\alpha_2 + 5\alpha_1 + 3\alpha_2) \\
    &= ((2m+3n+5)(-\alpha_1) + (m+2n+3)(3\alpha_1 + \alpha_2) - (2x +3y+5)\alpha_1 - (x+2y+3)\alpha_2 \\
    &= (m+3n-2x-3y-1)\alpha_1+(m+2n-x-2y)\alpha_2.
\end{align*}
Repeating this process with every remaining Weyl group element yields  the contents of Table \ref{partition function evaluations}.

\setlength\extrarowheight{2pt}
\begin{table}[htb!]
    \centering 
    \begin{tabular}{|c| c| c| c| c| c| c| }
    \hline
    $\sigma$ & $\ell(\sigma)$ & $\sigma(\lambda+\rho)-(\mu+\rho)$
    \\
    \hline

    $1$ & 0 & $ \left(2m+3n-2x-3y\right)\alpha_1+\left(m+2n-x-2y\right)\alpha_2$ \\\hline
    
    $s_1$ & 1 & $ \left(m+3n-2x-3y-1\right)\alpha_1+\left(m+2n-x-2y\right)\alpha_2$ \\\hline
    
    $s_2$ & 1 &  $\left(2m+3n-2x-3y\right)\alpha_1+\left(m +n-x-2y-1\right)\alpha_2$ \\\hline
    
    $s_2s_1$ & 2 & $\left(m+3n-2x-3y -1\right)\alpha_1+\left(n-x-2y-2\right)\alpha_2$ \\\hline
    
    $s_1s_2$ & 2 & $\left(m-2x-3y-4\right)\alpha_1+\left(m+n-x-2y-1\right)\alpha_2$ \\\hline
  
    $s_1s_2s_1$ & 3 & $\left(-m-2x-3y-6\right)\alpha_1+\left(n-x-2y-2\right)\alpha_2$ \\\hline
    
    $s_2s_1s_2$ & 3 & $\left(m-2x-3y-4\right)\alpha_1+\left(-n-x-2y-4\right)\alpha_2$ \\\hline
    
    $(s_1s_2)^2$ & 4 & $\left(-m-3n-2x-3y-9\right)\alpha_1+\left(-n-x-2y-4\right)\alpha_2$ \\\hline
    
    $(s_2s_1)^2$ & 4 & $\left(-m-2x-3y-6\right)\alpha_1+\left(-m-n-x-2y-5\right)\alpha_2$ \\\hline
    
    $s_1(s_2s_1)^2$ & 5 & $\left(-2m-3n-2x-3y-10\right)\alpha_1+\left(-m-n-x-2y-5\right)\alpha_2$ \\\hline
    
    $s_2(s_1s_2)^2$ & 5& $\left(-m-3n-2x-3y-9\right)\alpha_1+\left(-m-2n-x-2y-6\right)\alpha_2$ \\\hline
    
    $(s_1s_2)^3$ & 6 & $\left(-2m-3n-2x-3y-10\right)\alpha_1+\left(-m-2n-x-2y-6\right)\alpha_2$ \\\hline
\end{tabular}\caption{Evaluations of $\sigma(\lambda+\rho)-(\mu+\rho)$ for $\sigma \in W$.}\label{partition function evaluations}
\end{table}
\setlength\extrarowheight{0pt}

Observe that for $m, n, x, y \in \NN$, the $q$-analog of Kostant's partition function evaluates to zero if the coefficient of either $\alpha_1$ or $\alpha_2$ is negative. Thus, given the computations appearing in Table \ref{partition function evaluations}, we note that the only elements of the Weyl group that contribute to Kostant's weight $q$-multiplicity formula are $1, s_1, s_2, s_2s_1,$ and $s_1s_2$. The remaining elements of $W$ never contribute and, hence, we disregard them moving forward. With these observations, we are now ready to prove Theorem \ref{thm:main} by evaluating $m_q(\lambda, \mu)$ as appearing in \eqref{eq:qKWMF}.

\section{The \texorpdfstring{$q$}{q}-analog of Kostant's Weight Multiplicity Formula}
\label{qKWMF}
\subsection{Evaluation of \texorpdfstring{$m_q(\lambda,\mu)$}{mq}} In the previous section, we established that $1, s_1, s_2, s_2s_1,$ and $s_1s_2$ are the only Weyl group elements that contribute nontrivially to $m_q(\lambda,\mu)$ whenever $\lambda={m\w_1+n\w_2}=(2m+3n)\a_1+(m+2n)\a_2$ and ${\mu={x\w_1+y\w_2}=(2x+3y)\a_1+(x+2y)\a_2}$ with $m,n,x,y\in \mathbb{N}$. For the sake of simplicity, we make the following change of variables
\begin{align}
\begin{split}
        a &= 2m +3n -2x -3y,
        \\
        b &= m +2n -x -2y,
        \\
        c &= m +3n -2x -3y -1,
        \\
        d &= m +n -x -2y -1,
        \\
        e &= n-x-2y-2, \text{and}
        \\
        f &= m-2x-3y -4.
        \end{split}\label{ABCDEF labels}
\end{align}
Utilizing this change of variables together with the evaluations in Table \ref{partition function evaluations} for $\sigma=1, s_1, s_2, s_2s_1,$ and $s_1s_2$, we obtain
\begin{align}
\begin{split}
        P&=\wp_q(1(\lambda + \rho) - (\mu + \rho) )=\wp_q( a\alpha_1+b\alpha_2),
        \\
        Q&=\wp_q(s_1(\lambda + \rho) - (\mu + \rho) )=\wp_q( c\alpha_1+b\alpha_2),
        \\
        R&=\wp_q(s_2(\lambda + \rho) - (\mu + \rho) )=\wp_q( a\alpha_1+d\alpha_2),
        \\
        S&=\wp_q(s_2s_1(\lambda + \rho) - (\mu + \rho) )= \wp_q({c}\alpha_1+{e}\alpha_2),\; \text{and}
        \\
        T&=\wp_q(s_1s_2(\lambda + \rho) - (\mu + \rho) )=\wp_q( {f}\alpha_1+{d}\alpha_2).
        \end{split}\label{KWMF Relabeling}
\end{align}
The expressions in equation \eqref{KWMF Relabeling} are precisely the expressions described in \eqref{PQRST evaluations} and are the terms needed to evaluate $m_q(\lambda,\mu)$. However, there can be instances where certain values of $m,n,x,y \in \NN$ result in some of the expressions in \eqref{KWMF Relabeling} being zero, while others remain nonzero. When an expression is zero we say it contributes trivially to the $q$-multiplicity; if instead the expression is nonzero, then we say it contributes nontrivially to the $q$-multiplicity.

From \eqref{KWMF Relabeling}, we know that there are at most five terms, namely $P,Q,R,S, \text{ and } T$ that can contribute to $m_q(\lambda,\mu)$ depending on the values of $m,n,x,y\in\mathbb{N}$. This gives us at most $2^5=32$ distinct possible formulas for $m_q(\lambda,\mu)$. In the work that follows, we will prove that of these $32$ distinct possible cases only $8$ can occur. 

As is standard, we let $\vee$ denote the Boolean operator {\it{or}}, and $\wedge$ denote the Boolean operator {\it{and}}. Note that $a,b,c,d,e,f$, as given in \eqref{ABCDEF labels}, are always integer quantities. Hence, when $a,b,c,d,e,f$ are nonnegative, then $P,Q,R,S$, and $T$ contribute nontrivially to $m_q(\lambda,\mu)$. To simplify notation, we define the statements 
\begin{center}
\begin{tabular}{cccccc}
$a_0 :\; a \geq 0$, & $a_1 :\; a < 0$, &
$b_0 :\; b \geq 0$, & $b_1 :\; b < 0$, &
     $c_0 :\; c \geq 0$, & $c_1 :\; c < 0$,\\
     $d_0 :\; d \geq 0$, & $d_1 :\; d < 0$, &
     $e_0 :\; e \geq 0$, & $e_1 :\; e < 0$, &
     $f_0 :\; f \geq 0$, & $f_1 :\; f < 0$.\\
\end{tabular}
\end{center}
Thus, by definition of Kostant's partition function we have that
\begin{align}\label{PQRSTtruelogicnotation}
    \begin{split}
    &P \; \; \text{contributes nontrivially if and only if} \; \;a_0 \wedge b_0 \; \; \text{holds true},
    \\
    &Q \; \; \text{contributes nontrivially if and only if} \; \; c_0 \wedge b_0 \; \; \text{holds true},
    \\
    &R \; \; \text{contributes nontrivially if and only if} \; \; a_0 \wedge d_0 \; \; \text{holds true},
    \\
   &S \; \; \text{contributes nontrivially if and only if} \; \; {c_0} \wedge {e_0} \; \; \text{holds true},
    \\
    &T \; \; \text{contributes nontrivially if and only if} \; \; {f_0} \wedge {d_0} \; \; \text{holds true}.
    \end{split}
\end{align}

Hence, 

\begin{align}\label{PQRSTfalselogicnotation}
\begin{split}
    &P \; \; \text{contributes trivially if and only if} \; \;a_1 \vee b_1 \; \; \text{holds true},
    \\
    &Q \; \; \text{contributes trivially if and only if} \; \; c_1 \vee b_1 \; \; \text{holds true},
    \\
    &R \; \; \text{contributes trivially if and only if} \; \; a_1 \vee d_1 \; \; \text{holds true},
    \\
   &S \; \; \text{contributes trivially if and only if} \; \; {c_1} \vee {e_1} \; \; \text{holds true},
    \\
    &T \; \; \text{contributes trivially if and only if} \; \; {f_1} \vee {d_1} \; \; \text{holds true}.
\end{split}
\end{align}

We briefly illustrate our method of proof via an example. From the descriptions in \eqref{PQRSTtruelogicnotation} and \eqref{PQRSTfalselogicnotation}, we know that $m_q(\lambda,\mu)=P-Q+{T}$ when $P,Q,{T}$ contribute nontrivially and $R,{S}$ contribute trivially. This implies that the following necessary condition must be true: 
\[(a_0\wedge b_0)\wedge(c_0\wedge b_0)\wedge(a_1\vee d_1)\wedge (c_1\vee e_1)\wedge(f_0\wedge d_0).\]
However, we note that such a logical statement contains $(a_0\wedge d_0)\wedge(a_1\vee d_1)$, which can never be true. This establishes that $m_q(\lambda,\mu)\neq P-Q+{T}$ whenever $m,n,x,y\in\mathbb{N}$. In this case, we would state that $P-Q+{T}$ is a \emph{forbidden $q$-multiplicity formula}. We now give a general definition.

\begin{definition}
Fix $\lambda=m\w_1+n\w_2$ and $\mu=x\w_1+y\w_2$ with $m,n,x,y\in\mathbb{N}$. Let $P,Q,R,S,T$ be as in \eqref{KWMF Relabeling}, with $\sgn(P)=\sgn(S)=\sgn(T)=1$ and $\sgn(Q)=\sgn(R)=-1$. For any subset $X\subseteq\{P,Q,R,S,T\}$, if $m_q(\lambda,\mu)\neq \sum_{x\in X}\sgn(x)x$, then  $\sum_{x\in X}\sgn(x)x$ is said to be a \emph{forbidden $q$-multiplicity formula}.
\end{definition}
Using this new definition along with the technique illustrated above we establish the following.  

\begin{lemma}\label{contradictions to 21 possible cases}
Let $\lambda=m\w_1+n\w_2$ and $\mu=x\w_1+y\w_2$ with $m,n,x,y\in\mathbb{N}$. If $P,Q,R,S,T$ are as in \eqref{KWMF Relabeling}, then the formulas $\sum_{x\in X}\sgn(x)x$, with $X\subseteq\{P,Q,R,S,T\}$, listed in Table \ref{PQRST simple contradictions listed} are forbidden $q$-multiplicity formulas.
\end{lemma}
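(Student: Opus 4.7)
The plan is to show that for each subset $X\subseteq\{P,Q,R,S,T\}$ corresponding to a formula listed in Table~\ref{PQRST simple contradictions listed}, the associated Boolean statement
\[
\Psi_X \;=\; \Bigl(\bigwedge_{Y\in X}\text{nontrivial-condition}(Y)\Bigr)\wedge\Bigl(\bigwedge_{Y\notin X}\text{trivial-condition}(Y)\Bigr),
\]
read off from \eqref{PQRSTtruelogicnotation} and \eqref{PQRSTfalselogicnotation}, is unsatisfiable. Once this is established, no choice of $m,n,x,y\in\NN$ can realize $m_q(\lambda,\mu)=\sum_{x\in X}\sgn(x)x$, so the formula is forbidden in the sense of the preceding definition. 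My approach mirrors the template already illustrated in the excerpt: for each row of the table, I would exhibit a pair $(v,w)\in\{a,b,c,d,e,f\}^2$ such that $\Psi_X$ literally contains the contradictory subformula $(v_0\wedge w_0)\wedge(v_1\vee w_1)$, which is $\bot$ because $v_0\wedge v_1$ and $w_0\wedge w_1$ are each impossible.

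The pair $(v,w)$ is supplied by the trivial condition of some element $Y\notin X$: inspecting \eqref{PQRSTfalselogicnotation}, the trivial conditions pair $P\leftrightarrow(a,b)$, $Q\leftrightarrow(c,b)$, $R\leftrightarrow(a,d)$, $S\leftrightarrow(c,e)$, and $T\leftrightarrow(f,d)$. The positivity facts $v_0\wedge w_0$ must then be supplied by the nontrivial conditions of elements of $X$, using that $P$ yields $\{a_0,b_0\}$, $Q$ yields $\{c_0,b_0\}$, $R$ yields $\{a_0,d_0\}$, $S$ yields $\{c_0,e_0\}$, and $T$ yields $\{f_0,d_0\}$. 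Matching suppliers to excluded triggers produces three natural families of simple contradictions: $\{Q,R\}\subseteq X$ with $P\notin X$, exploiting $P$'s trivial condition $a_1\vee b_1$; $\{P,S\}\subseteq X$ with $Q\notin X$, exploiting $c_1\vee b_1$; and $\{P,T\}\subseteq X$ with $R\notin X$, which is the exact case worked out in the excerpt via the pair $(a,d)$. I expect every entry of Table~\ref{PQRST simple contradictions listed} to land in one of these families, possibly with $S$ or $T$ adjoined as passive extras whose own conditions remain consistent.

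The principal obstacle is the bookkeeping needed to walk through each entry uniformly. I would execute the proof by augmenting Table~\ref{PQRST simple contradictions listed} with extra columns recording, for each row, the triggering excluded element $Y\notin X$, the two elements of $X$ supplying $v_0$ and $w_0$, and the witnessing pair $(v,w)$ itself; the proof then collapses to reading off each row and invoking the template $(v_0\wedge w_0)\wedge(v_1\vee w_1)\equiv\bot$. Notably, no use of the arithmetic relations among the integer quantities $a,b,c,d,e,f$ from \eqref{ABCDEF labels} is required at this stage, since every obstruction covered by this lemma is a purely Boolean consequence of \eqref{PQRSTtruelogicnotation}--\eqref{PQRSTfalselogicnotation}; any subtler forbidden formulas that are not caught by a direct pair $(v,w)$ will instead require the linear inequalities relating these variables and are therefore left to subsequent lemmas.
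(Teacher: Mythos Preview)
Your proposal is correct and matches the paper's approach exactly: the paper proves the lemma by exhibiting, for each row of Table~\ref{PQRST simple contradictions listed}, a pair $(v,w)$ such that the necessary Boolean condition contains the contradictory subformula $(v_0\wedge w_0)\wedge(v_1\vee w_1)$, with no appeal to the arithmetic relations among $a,\ldots,f$. Your organization into the three families $\{Q,R\}\subseteq X,\,P\notin X$; $\{P,S\}\subseteq X,\,Q\notin X$; and $\{P,T\}\subseteq X,\,R\notin X$ is in fact a cleaner packaging than the paper's row-by-row listing, and one can check that all eleven entries of the table indeed fall into one of these three patterns.
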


\begin{table}[htb!]
\resizebox{\textwidth}{!}{
\begin{tabular}{|l | l | l |}
\hline
\textit{$m_q(\lambda,\mu)$} & \textit{Necessary Conditions} & \textit{Contradictions} \\
\hline
$P-Q+S+T$ & $\state{\P}{\Q}{\notR}{\S}{\T}  $ & $(a_0 \wedge d_0) \wedge(a_1 \vee d_1)$\\ 
\hline
$P-R+S+T$ & $\state{\P}{\notQ}{\R}{\S}{\T}  $ & $(c_0\wedge b_0) \wedge ( c_1 \vee b_1)$\\
\hline
$-Q-R+S+T$ & $  \state{\notP}{\Q}{\R}{\S}{\T} $  & $(a_0 \wedge b_0) \wedge (a_1 \vee b_1)$ \\ 
\hline
$P+S+T$ & $ \state{\P}{\notQ}{\notR}{\S}{\T}  $ & $(c_0\wedge b_0) \wedge ( c_1 \vee b_1)$\\  
\hline
 $P-R+S$ & $\state{\P}{\notQ}{\R}{\S}{\notT}$ & $(c_0 \wedge b_0) \wedge (c_1 \vee b_1) $
 \\
 \hline
 $P-Q+T$ &  $\state{\P}{\Q}{\notR}{\notS}{\T} $ & $(a_0 \wedge d_0) \wedge (a_1 \vee d_1)$
 \\
 \hline
  $-Q-R+S$ & $ \state{\notP}{\Q}{\R}{\S}{\notT} $  &$(a_0 \wedge b_0) \wedge (a_1 \vee b_1)$\\  
 \hline
 $-Q-R+T$ & $ \state{\notP}{\Q}{\R}{\notS}{\T} $  & $(a_0 \wedge b_0) \wedge (a_1 \vee b_1)$\\  
 \hline
 $P+S$ & $ \state{\P}{\notQ}{\notR}{\S}{\notT} $ &$(b_0 \wedge c_0) \wedge(b_1 \vee c_1)$\\
 \hline
 $P+T$ & $\state{\P}{\notQ}{\notR}{\notS}{\T}$  &$(a_0\wedge d_0) \wedge ( a_1 \vee d_1)$ \\ 
 \hline
 $-Q-R$ & $  \state{\notP}{\Q}{\R}{\notS}{\notT} $  & $(a_0\wedge b_0) \wedge ( a_1 \vee b_1)$\\ 
 \hline
\end{tabular}
}
\caption{Forbidden $q$-multiplicity formulas for Lemma \ref{contradictions to 21 possible cases}.}
\label{PQRST simple contradictions listed}
\end{table}

\begin{proof} 
Our work in the previous example has already established that $P-Q+T$ is a forbidden $q$-multiplicity formula. Next, consider the case where  $m_q(\lambda,\mu)=P+S+T$. As a consequence of \eqref{PQRSTtruelogicnotation} and \eqref{PQRSTfalselogicnotation}, the following statement must hold true:
\begin{align*}
    & 
   (a_0 \wedge b_0)  \wedge (c_0 \wedge e_0) \wedge (f_0 \wedge d_0) \wedge (c_1 \vee b_1) \wedge (a_1 \vee d_1).
\end{align*}
However, this also implies that $(a_0 \wedge d_0) \wedge (a_1 \vee d_1)$, which is a contradiction. Therefore, $P+S+T$ is a forbidden $q$-multiplicity formula. 

In Table \ref{PQRST simple contradictions listed}, we give a total of eleven cases (including the two considered above) which give rise to forbidden $q$-multiplicity formulas. Note that for each case, we specify both the necessary  condition that must be true in order for that formula to hold, as well as the contradiction that arises from such a case. 
\end{proof}

Our next result establishes $13$ additional forbidden $q$-multiplicity formulas.

\begin{lemma}\label{8 non trivial combos given by inequalites}
Let $\lambda=m\w_1+n\w_2$ and $\mu=x\w_1+y\w_2$ with $m,n,x,y\in\mathbb{N}$. If $P,Q,R,S,T$ are as in \eqref{KWMF Relabeling}, then the formulas $\sum_{x\in X}\sgn(x)x$, with $X\subseteq\{P,Q,R,S,T\}$, listed in Table~\ref{forbidden 13} 
are forbidden $q$-multiplicity formulas.
 \end{lemma}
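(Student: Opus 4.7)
My plan is to exploit the fact that the six quantities $a,b,c,d,e,f$ defined in \eqref{ABCDEF labels} are not logically independent: unlike the situation in Lemma \ref{contradictions to 21 possible cases}, where purely Boolean identities such as $(a_0 \wedge d_0) \wedge (a_1 \vee d_1)$ supplied the contradictions, here I would extract extra implications directly from the arithmetic definitions of $a,b,c,d,e,f$ in terms of $m,n,x,y \in \NN$. Computing the relevant differences,
\begin{align*}
a - c &= m + 1 \geq 1, & b - d &= n + 1 \geq 1, \\
c - e &= m + 2n + 1 \geq 1, & d - f &= n + x + y + 3 \geq 3,
\end{align*}
and hence also $b - e = m + n + 2 \geq 2$ and $a - f = m + 3n + 4 \geq 4$. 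These inequalities immediately yield the key implications
\[
c_0 \Rightarrow a_0, \quad d_0 \Rightarrow b_0, \quad e_0 \Rightarrow c_0 \text{ (and hence } e_0 \Rightarrow a_0, b_0\text{)}, \quad f_0 \Rightarrow d_0 \text{ (and hence } f_0 \Rightarrow a_0, b_0\text{)}.
\]

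With these tools in hand, I would proceed exactly as in the proof of Lemma \ref{contradictions to 21 possible cases}: for each subset $X\subseteq\{P,Q,R,S,T\}$ listed in Table~\ref{forbidden 13}, write out the conjunction of the conditions in \eqref{PQRSTtruelogicnotation} for the terms in $X$ together with the conditions in \eqml{PQRSTfalselogicnotation} for the terms not in $X$, and then invoke the implications above to reach a contradiction. For instance, for the formula $m_q(\lambda,\mu)=S$ alone, the necessary condition $(c_0\wedge e_0)\wedge(a_1\vee b_1)\wedge(c_1\vee b_1)\wedge(a_1\vee d_1)\wedge(f_1\vee d_1)$ collapses via $c_0\Rightarrow a_0$ and $e_0\Rightarrow b_0$ into a contradiction with $a_1\vee b_1$. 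For $m_q(\lambda,\mu)=P+Q\cdot(\text{nothing})\dots = -Q+ \ldots$ involving $T$ alone or $R$ alone, the implication $f_0\Rightarrow a_0,b_0$ (respectively $d_0\Rightarrow b_0$) kills the necessary trivial-condition $a_1 \vee b_1$ for $P$. The slightly more delicate cases are $\{P,Q,S\}$ and $\{P,R,T\}$: in the first, $S$ contributing forces $e_0$, so $d_0$ by the argument $e_0 \Rightarrow d_0$ (since $d - e = m+1 \geq 1$), which conflicts with $R$ being trivial once $a_0$ is known; in the second, $T$ contributing forces $f_0$ and hence $c_0$ (since $c - f = 3n + 3 \geq 3$), conflicting with $Q$ being trivial once $b_0$ is known.

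I do not anticipate a conceptual obstacle so much as a bookkeeping challenge: the main work is to organize the thirteen cases cleanly so that each contradiction is visible in a single line of Table~\ref{forbidden 13}, and to verify that the list of derived implications $\{c_0\Rightarrow a_0,\; d_0\Rightarrow b_0,\; e_0\Rightarrow b_0,\;e_0\Rightarrow c_0,\;e_0\Rightarrow d_0,\; f_0\Rightarrow a_0,\;f_0\Rightarrow c_0,\;f_0\Rightarrow d_0\}$ is exhaustive enough to eliminate every remaining subset. Once this table is assembled, combining Lemma~\ref{contradictions to 21 possible cases} (11 cases) with Lemma~\ref{8 non trivial combos given by inequalites} (13 cases) leaves precisely the eight formulas enumerated in Theorem~\ref{thm:main}, which confirms that our case analysis is complete.
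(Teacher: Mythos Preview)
Your approach is essentially the same as the paper's: the paper's Cases A--D are precisely the contrapositives of your four core implications $e_0\Rightarrow d_0$, $f_0\Rightarrow c_0$, $c_0\Rightarrow a_0$, $d_0\Rightarrow b_0$, derived by the same arithmetic (the paper manipulates the inequalities rather than computing the differences $d-e=m+1$, $c-f=3n+3$, $a-c=m+1$, $b-d=n+1$ directly, but the content is identical), and the thirteen cases are then dispatched exactly as you outline. One small correction: your claimed value $c-e=m+2n+1$ is wrong---in fact $c-e=m+2n-x-y+1$, which can be negative---but this is harmless since you never actually invoke $e_0\Rightarrow c_0$ in any of the thirteen cases (and the implication does hold anyway, via $c=m+3e+x+3y+5$).
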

 \begin{table}[htb!]
{
\begin{tabular}{|l|l | l |}
\hline
\textit{Case} &\textit{$m_q(\lambda,\mu)$} & \textit{Necessary  Conditions}\\
\hline
1&$P-R+T$ & $ \state{\P}{\notQ}{\R}{\notS}{\T}$ 
 \\  
\hline
2&{$P-Q+S$} & $\state{\P}{\Q}{\notR}{\S}{\notT} $ \\  
\hline

3&$-Q+S+T$ & $ \state{\notP}{\Q}{\notR}{\S}{\T}$\\  
\hline
4&$-R+S+T$ & $ \state{\notP}{\notQ}{\R}{\S}{\T}$\\  
\hline
5&$-Q+S$ & $ \state{\notP}{\Q}{\notR}{\S}{\notT}$ \\  

\hline
6&$-Q+T$ & $\state{\notP}{\Q}{\notR}{\notS}{\T}$\\ 
\hline
7&$-R+S$ & $  \state{\notP}{\notQ}{\R}{\S}{\notT}$ \\ 
\hline
8&$-R+T$ & $ \state{\notP}{\notQ}{\R}{\notS}{\T}$\\  
\hline
9&$S+T$ & $ \state{\notP}{\notQ}{\notR}{\S}{\T}$\\  
\hline 
10&$-Q$ &  $ \state{\notP}{\Q}{\notR}{\notS}{\notT} $\\ 
\hline
11&$-R$ &  $\state{\notP}{\notQ}{\R}{\notS}{\notT} $\\  
\hline
12&$S$ & $ \state{\notP}{\notQ}{\notR}{\S}{\notT}$ \\  
\hline
13&$T$ &  $ \state{\notP}{\notQ}{\notR}{\notS}{\T} $\\  
\hline
\end{tabular}
}
\caption{Forbidden $q$-multiplicity formulas for Lemma \ref{8 non trivial combos given by inequalites}.}
\label{forbidden 13}
\end{table}

\begin{proof}
We begin by describing a set of statements that give rise to contradictions. These cases will allow us to establish that the $q$-multiplicities listed in Table \ref{forbidden 13} are forbidden. \begin{enumerate}
\item[Case A:]     
Assume the statement $e_0 \wedge d_1$ holds true. If $d=m+n-x-2y-1<0$, then  $m+n-2y-1<x$. Also, if $e=n-x-2y-2\geq 0$, then $n-2y-2\geq x$. Hence, $n-2y-2>m+n-2y-1.$ Solving for $m$ explicitly yields $m<-1$, implying that whenever $e_0\wedge d_1$ holds true  the corresponding system of inequalities does not have a nonnegative integer solution.

\item[Case B:] Assume the statement $f_0 \wedge c_1$ holds true. If $c=m+3n-2x-3y-1<0$, then $m+3n-3y-1<2x$. Also, if $f=m-2x-3y-4\geq 0$, then $m-3y-4\geq 2x$. Hence, $m-3y-4>m+3n-3y-1$. Solving for $n$ explicitly yields $n<-1$, implying that this corresponding system of inequalities does not have a nonnegative integer solution. 

\item[Case C:] Assume the statement $c_0 \wedge a_1$ holds true. We observe that if $a=2m+3n-2x-3y<0,$ then $2m+3n-3y<2x.$ Also, if $c=m+3n-2x-3y-1\geq 0,$ then $m+3n-3y-1\geq 2x.$ We join these two inequalities to obtain $m+3n-3y-1>2m+3n-3y.$ If we solve for $m$ explicitly, we obtain that $m<-1,$ implying that such a system has no solutions. 

\item[Case D:] Assume the statement $d_0 \wedge b_1$ holds true. We observe that if $b=m+2n-x-2y<0,$ then $m+2n-2y<x.$ Also, if $d=m+n-x-2y-1\geq 0,$ then $m+n-2y-1\geq x.$ We join these two inequalities to obtain $m+n-2y-1>m+2n-2y.$ If we solve for $n$ explicitly, we obtain that $n<-1,$ implying that such a system has no solutions.
\end{enumerate}

Utilizing the cases above, we are now ready to consider each $q$-multiplicity listed in Table \ref{forbidden 13} and show each is forbidden. 
\begin{enumerate}
    
    \item[Case 1:] The necessary condition for $m_q(\lambda,\mu)=P-R+T$ is given by 
    \[\state{\P}{\notQ}{\R}{\notS}{\T}.\]
    Since the logical statement must hold true and it contains $a_0\wedge b_0\wedge d_0\wedge f_0$, it must be that $(c_1 \vee b_1) \wedge (c_1 \vee e_1)$ reduces to $c_1$ or $c_1\wedge e_1$. Otherwise, it would contain the contradiction $b_0 \wedge b_1$. We list all the possible ways in which the necessary condition for this case can be true and describe a contradiction arising from each possibility.
    \begin{center}
    \begin{tabular}{|l|l|}\hline
        \textit{Possible Logical Conditions} & \textit{Contradiction} \\\hline
        $\big( a_0 \wedge b_0\wedge d_0\wedge f_0 \big)  \;\wedge \; c_1$&$f_0\wedge c_1$ (Case B)\\\hline
        $\big( a_0 \wedge b_0\wedge d_0\wedge f_0 \big)  \;\wedge \;(c_1\wedge e_1)$&$f_0\wedge c_1$ (Case B)\\\hline
    \end{tabular}
    \end{center}
    
    \item[Case 2:] The necessary condition for $m_q(\lambda,\mu)=P-Q+S$ is given by 
    \[ \state{\P}{\Q}{\notR}{\S}{\notT}.\]
    Since the logical statement must hold true and it contains $a_0\wedge b_0\wedge c_0\wedge e_0$, it must be that $(\notR)\wedge (\notT)$ reduces to $d_1$ or $d_1\wedge f_1$. Otherwise, it would contain the contradiction $a_0\wedge a_1$. We list all the possible ways in which the necessary condition for this case can be true and describe a contradiction arising from each possibility.
\begin{center}
    \begin{tabular}{|l|l|}\hline
        \textit{Possible Logical Conditions} & \textit{Contradiction} \\\hline
        $(a_0\wedge b_0\wedge c_0\wedge e_0)\wedge d_1$&$e_0\wedge d_1$ (Case A)\\\hline
        $(a_0\wedge b_0\wedge c_0\wedge e_0)\wedge (d_1\wedge f_1)$&$e_0\wedge d_1$ (Case A)\\\hline
    \end{tabular}
    \end{center}
    
    \item[Case 3:] The necessary condition for $m_q(\lambda,\mu)=-Q+S+T$ is given by 
    \[ \state{\notP}{\Q}{\notR}{\S}{\T}.\]
        Since the logical statement must hold true and it contains $b_0\wedge c_0\wedge d_0\wedge e_0\wedge f_0$, it must be that $(\notP)\wedge (\notR)$ reduces to $a_1$. Otherwise, it would contain the contradiction $b_0\wedge b_1$ or $d_0\wedge d_1$. Thus, the only possible way in which the necessary condition for this case can be true is if $(b_0\wedge c_0\wedge d_0\wedge e_0\wedge f_0)\wedge a_1$ is true. However, this case contains the contradiction $c_0\wedge a_1$ as seen in Case C.
    
    \item[Case 4:] The necessary condition for $m_q(\lambda,\mu)=-R+S+T$ is given by 
    \[\state{\notP}{\notQ}{\R}{\S}{\T} .\]
    Since the logical statement must hold true and it contains $a_0\wedge c_0\wedge d_0\wedge e_0\wedge f_0$, it must be that $(\notP)\wedge (\notQ)$ reduces to $b_1$. Otherwise, it would contain the contradiction $a_0\wedge a_1$ or $c_0\wedge c_1$. Thus, the only possible way in which the necessary condition for this case can be true is if $(a_0\wedge c_0\wedge d_0\wedge e_0\wedge f_0)\wedge b_1$ is true. However, this case contains the contradiction $d_0\wedge b_1$ as seen in Case D.
    
    \item[Case 5:] The necessary condition for $m_q(\lambda,\mu)=-Q+S$ is given by 
    \[\state{\notP}{\Q}{\notR}{\S}{\notT} .\]
    Since the logical statement must hold true and it contains $b_0\wedge c_0\wedge e_0$, it must be that $(a_1 \vee b_1)  \wedge (a_1 \vee d_1) \wedge (f_1 \vee d_1)$ reduces to $a_1\wedge d_1$, $a_1\wedge f_1$, or $a_1\wedge d_1\wedge f_ 1$. Otherwise, it would contain the contradiction $b_0\wedge b_1$. Thus, there are three possible ways in which the necessary condition for this case can be true. Next, we list all the possible ways in which the necessary condition for this case can be true and describe a contradiction arising from each possibility.

\begin{center}
    \begin{tabular}{|l|l|}\hline
    \textit{Possible Logical Conditions} & \textit{Contradiction} \\\hline
    $(b_0\wedge c_0\wedge e_0)\wedge (a_1\wedge d_1)$&$e_0\wedge d_1$ (Case A)\\\hline
    $(b_0\wedge c_0\wedge e_0)\wedge (a_1\wedge f_1)$&$c_0\wedge a_1$ (Case C)\\\hline
    $(b_0\wedge c_0\wedge e_0)\wedge (a_1\wedge d_1\wedge f_1)$&$c_0\wedge a_1$ (Case C)\\\hline
    \end{tabular}
    \end{center}

    \item[Case 6:] The necessary condition for $m_q(\lambda,\mu)=-Q+T$ is given by 
    \[\state{\notP}{\Q}{\notR}{\notS}{\T} .\]
    Since the logical statement must hold true and it contains $b_0\wedge c_0\wedge d_0\wedge f_0$, it must be that $(a_1 \vee b_1)  \wedge (a_1 \vee d_1)  \wedge (c_1 \vee e_1)$ reduces to $a_1\wedge e_1$. Otherwise, it would contain the contradiction $b_0\wedge b_1$, $c_0\wedge c_1$, or $d_0\wedge d_1$. Thus, the only possible way in which the necessary condition for this case can be true is if $(b_0\wedge c_0\wedge d_0\wedge f_0)\wedge (a_1 \wedge e_1)$ is true. However, this case contains the contradiction $c_0\wedge a_1$ as seen in Case C.

    \item[Case 7:] The necessary condition for $m_q(\lambda,\mu)=-R+S$ is given by 
    \[\state{\notP}{\notQ}{\R}{\S}{\notT} .\]
    Since the logical statement must hold true and it contains $a_0\wedge c_0\wedge  d_0\wedge e_0$, it must be that $(a_1 \vee b_1) \wedge (c_1 \vee b_1)  \wedge (f_1 \vee d_1)$ reduces to $b_1\wedge f_1$. Otherwise, it would contain the contradiction $a_0\wedge a_1$, $c_0\wedge c_1$ or $d_0\wedge d_1$.
    Thus, there is only one possible way in which the necessary condition for this case can be true, namely, if $a_0\wedge c_0\wedge d_0\wedge e_0\wedge b_1\wedge f_1$ is true. However, this gives rise to the contradiction  $d_0\wedge b_1$ as seen in Case D.
    
    \item[Case 8:] The necessary condition for $m_q(\lambda,\mu)=-R+T$ is given by 
    \[\state{\notP}{\notQ}{\R}{\notS}{\T} .\]
    Since the logical statement must hold true and it contains $a_0\wedge d_0\wedge f_0$, it must be that $(a_1 \vee b_1) \wedge (c_1 \vee b_1)  \wedge (c_1 \vee e_1)$ reduces to $b_1\wedge c_1$, $b_1\wedge e_1$, or $b_1\wedge c_1\wedge e_1$. Otherwise, it would contain the contradiction $a_0\wedge a_1$.
    Thus, there are three possible ways in which the necessary condition for this case can be true. We list the three possible ways  in which the necessary condition can be true and describe a contradiction arising from each possibility.
    \begin{center}
    \begin{tabular}{|l|l|}\hline
    \textit{Possible Logical Conditions} & \textit{Contradiction} \\\hline
    $(a_0\wedge d_0\wedge f_0)\wedge (b_1\wedge c_1)$&$f_0\wedge c_1$ (Case B)\\\hline
    $(a_0\wedge d_0\wedge f_0)\wedge (b_1\wedge e_1)$&$d_0\wedge b_1$ (Case D)\\\hline
    $(a_0\wedge d_0\wedge f_0)\wedge (b_1\wedge c_1\wedge e_1)$&$f_0\wedge c_1$ (Case B)\\\hline
    \end{tabular}
    \end{center}
    
    \item[Case 9:] The necessary condition for $m_q(\lambda,\mu)=S+T$ is given by 
    \[\state{\notP}{\notQ}{\notR}{\S}{\T} .\]
    Since the logical statement must hold true and it contains $c_0\wedge d_0\wedge  e_0\wedge f_0$, it must be that $(a_1 \vee b_1) \wedge (c_1 \vee b_1) \wedge (a_1 \vee d_1)$ reduces to $a_1\wedge b_1$. Otherwise, it would contain the contradiction $c_0\wedge c_1$ or $d_0\wedge d_1$.
    Thus, there is only one possible way in which the necessary condition for this case can be true, namely, if $c_0\wedge d_0\wedge  e_0\wedge f_0\wedge a_1\wedge b_1$ is true. However, this gives rise to the contradiction  $c_0\wedge a_1$ as seen in Case C.
    \item[Case 10:] The necessary condition for $m_q(\lambda,\mu)=-Q$ is given by 
    \[\state{\notP}{\Q}{\notR}{\notS}{\notT} .\]
    Since the logical statement must hold true and it contains $b_0\wedge c_0$, it must be that $(a_1 \vee b_1)  \wedge (a_1 \vee d_1) \wedge (f_1 \vee d_1) \wedge (c_1 \vee e_1)$ reduces to $a_1\wedge e_1\wedge f_1$, $a_1\wedge e_1\wedge d_1$, or $a_1\wedge e_1\wedge d_1\wedge f_1$. Otherwise, it would contain the contradiction $b_0\wedge b_1$ or $c_0\wedge c_1$.
    Thus, there are three possible ways in which the necessary condition for this case can be true. We list these possibilities and describe a contradiction arising from each possibility.
    \begin{center}
    \begin{tabular}{|l|l|}\hline
    \textit{Possible Logical Conditions} & \textit{Contradiction} \\\hline
    $(b_0\wedge c_0)\wedge (a_1\wedge e_1\wedge f_1)$&$c_0\wedge a_1$ (Case C)\\\hline
    $(b_0\wedge c_0)\wedge (a_1\wedge e_1\wedge d_1)$&$c_0\wedge a_1$ (Case C)\\\hline
    $(b_0\wedge c_0)\wedge (a_1\wedge e_1\wedge d_1\wedge f_1)$&$c_0\wedge a_1$ (Case C)\\\hline
    \end{tabular}
    \end{center}

    \item[Case 11:] The necessary condition for $m_q(\lambda,\mu)=-R$ is given by 
    \[\state{\notP}{\notQ}{\R}{\notS}{\notT} .\]
    Since the logical statement must hold true and it contains $a_0\wedge d_0$, it must be that $(a_1 \vee b_1) \wedge (c_1 \vee b_1)  \wedge (f_1 \vee d_1) \wedge (c_1 \vee e_1)$ reduces to $b_1\wedge f_1\wedge c_1$, $b_1\wedge f_1\wedge e_1$, or $b_1\wedge f_1\wedge c_1\wedge e_1$. Otherwise, it would contain the contradiction $a_0\wedge a_1$ or $d_0\wedge d_1$.
    Thus, there are only three possible ways in which the necessary condition for this case can be true. We list these possibilities and describe a contradiction arising from each possibility.
    \begin{center}
    \begin{tabular}{|l|l|}\hline
    \textit{Possible Logical Conditions} & \textit{Contradiction} \\\hline
    $(a_0\wedge d_0)\wedge (b_1\wedge f_1\wedge c_1)$&$d_0\wedge b_1$ (Case D)\\\hline
    $(a_0\wedge d_0)\wedge (b_1\wedge f_1\wedge e_1)$&$d_0\wedge b_1$ (Case D)\\\hline
    $(a_0\wedge d_0)\wedge (b_1\wedge f_1\wedge c_1\wedge e_1)$&$d_0\wedge b_1$ (Case D)\\\hline
    \end{tabular}
    \end{center}
    
    \item[Case 12:] The necessary condition for $m_q(\lambda,\mu)=S$ is given by 
    \[\state{\notP}{\notQ}{\notR}{\S}{\notT} .\]
    Since the logical statement must hold true and it contains $c_0\wedge e_0$, it must be that $(a_1 \vee b_1) \wedge (c_1 \vee b_1) \wedge (a_1 \vee d_1) \wedge (f_1 \vee d_1)$ reduces to 
    $a_1\wedge b_1\wedge d_1 \wedge f_1$, 
    $a_1\wedge b_1\wedge d_1$, 
    $a_1\wedge b_1\wedge f_1$, 
    $b_1\wedge d_1\wedge f_1$, or 
    $b_1\wedge d_1$. 
    Otherwise, it would contain the contradiction $c_0\wedge c_1$.
    Thus, there are five possible ways in which the necessary condition for this case can be true. We list these possibilities and describe a contradiction arising from each possibility.
    
    \begin{center}
    \begin{tabular}{|l|l|}\hline
    \textit{Possible Logical Conditions} & \textit{Contradiction} \\\hline
    $(c_0\wedge e_0)\wedge(a_1\wedge b_1\wedge d_1 \wedge f_1)$&$e_0\wedge d_1$ (Case A)\\\hline
    $(c_0\wedge e_0)\wedge(a_1\wedge b_1\wedge d_1)$&$e_0\wedge d_1$ (Case A)\\\hline
    $(c_0\wedge e_0)\wedge(a_1\wedge b_1\wedge f_1)$&$c_0\wedge a_1$ (Case C)\\\hline
    $(c_0\wedge e_0)\wedge(b_1\wedge d_1\wedge f_1)$&$e_0\wedge d_1$ (Case A)\\\hline
    $(c_0\wedge e_0)\wedge(b_1\wedge d_1)$&$e_0\wedge d_1$ (Case A)\\\hline
    \end{tabular}
    \end{center}
    
    \item[Case 13:] The necessary condition for $m_q(\lambda,\mu)=T$ is given by 
\[\state{\notP}{\notQ}{\notR}{\notS}{\T} .\]
    Since the logical statement must hold true and it contains $d_0\wedge f_0$, it must be that $(a_1 \vee b_1) \wedge (c_1 \vee b_1) \wedge (a_1 \vee d_1)  \wedge (c_1 \vee e_1)$ reduces to
    $a_1\wedge b_1\wedge c_1\wedge e_1$,
    $a_1\wedge b_1\wedge c_1$,
    $a_1\wedge b_1\wedge e_1$,
    $a_1\wedge  c_1\wedge e_1$, or
    $a_1\wedge  c_1$.
    Otherwise, it would contain the contradiction $d_0\wedge d_1$.
    Thus, there are five possible ways in which the necessary condition for this case can be true. We list these possibilities and describe a contradiction arising from each possibility.
    \begin{center}
    \begin{tabular}{|l|l|}\hline
    \textit{Possible Logical Conditions} & \textit{Contradiction} \\\hline
    $(d_0\wedge f_0)\wedge(a_1\wedge b_1\wedge c_1\wedge e_1)$&$f_0\wedge c_1$ (Case B)\\\hline
    $(d_0\wedge f_0)\wedge(a_1\wedge b_1\wedge c_1)$&$f_0\wedge c_1$ (Case B)\\\hline
    $(d_0\wedge f_0)\wedge(a_1\wedge b_1\wedge e_1)$&$d_0\wedge b_1$ (Case D)\\\hline
    $(d_0\wedge f_0)\wedge(a_1\wedge  c_1\wedge e_1)$&$f_0\wedge c_1$ (Case B)\\\hline
    $(d_0\wedge f_0)\wedge(a_1\wedge  c_1)$&$f_0\wedge c_1$ (Case B)\\\hline
    \end{tabular}
    \end{center}
    
\end{enumerate}
\end{proof}

With the proof of Lemma \ref{8 non trivial combos given by inequalites} concluded, we are now prepared to give the proof of our main result. 

\begin{proof}[Proof of Theorem \ref{thm:main}]
Note that after applying Lemma \ref{contradictions to 21 possible cases} and Lemma \ref{8 non trivial combos given by inequalites} it suffices to demonstrate the existence of the remaining eight cases that are listed in the statement of Theorem \ref{thm:main}. Table \ref{combinations of non trivial pqrst with examples} provides examples of these cases.

\begin{table}[htb!]

\begin{tabular}{|l | l | l | l |}
\multicolumn{4}{c}{} \\

\hline
\textit{Evaluations} & \textit{Necessary  Conditions} &$(m,n,x,y)$ & $(a,b,c,d,e,f)$ \\ 
\hline
$P-Q-R+S+T$ & $ a_0 \wedge b_0 \wedge c_0 \wedge d_0 \wedge e_0 \wedge f_0 $& $(5,6,0,0)$ & $(28,17,22,10,4,1)$\\  
\hline
$P-Q-R+{S}$ & $ a_0 \wedge b_0 \wedge c_0 \wedge d_0 \wedge e_0 \wedge f_1$ & $(0,4,0,0)$ & $(12,8,11,3,2,-4)$\\  
\hline
$P-Q-R+{T}$ &$ a_0 \wedge b_0 \wedge c_0 \wedge d_0 \wedge e_1 \wedge f_0 $ & $(5,0,0,0)$ & $(10,5,4,4,-2,1)$\\  
\hline
$P-Q-R$ & $ a_0 \wedge b_0 \wedge c_0 \wedge d_0  
\wedge e_1 \wedge f_1$ & $(5,4,0,4)$ & $(10,5,4,0,-6,-11)$\\  
\hline
$P-Q$ & $ a_0 \wedge b_0 \wedge c_0 
 \wedge d_1 \wedge e_1 \wedge f_1$ & $(0,50,51,0)$ & $(48,49,47,-2,-3,-106)$\\ 
\hline
$P-R$ & $ a_0 \wedge b_0 \wedge c_1 \wedge d_0 \wedge e_1 \wedge f_1$ & $(2,0,1,0)$ & $(2,1,-1,0,-3,-4)$\\  
\hline 
$P$ &  $ a_0 \wedge b_0  
\wedge c_1 \wedge d_1 \wedge e_1 \wedge f_1$ & $(0,0,0,0)$ & $(0,0,-1,-1,-2,-4)$\\
\hline
$0$ & $ (a_1 \vee b_1) \wedge (c_1 \vee b_1) \wedge (a_1 \vee d_1) $& $(0,0,8,0)$ & $(-16,-8,-17,-9,-10,-20)$\\
&$\wedge (f_1 \vee d_1) \wedge (c_1 \vee e_1)$&&\\
\hline
\end{tabular}
\caption{Examples establishing the existence of certain $q$-multiplicity formulas.}
\label{combinations of non trivial pqrst with examples}
\end{table}

With the existence of these evaluations established, we now show that each evaluation implies the corresponding statement given in Theorem \ref{thm:main}. 
We first establish additional statements that give rise to contradictions. Our methods are similar to those employed in the proof of Lemma \ref{8 non trivial combos given by inequalites}.

\begin{enumerate}
    \item [Case E:] Assume the statement $a_0 \wedge f_0 \wedge d_1$ holds true. We observe that if $d=m+n-x-2y-1<0,$ then $2m+2n-4y-2<2x.$ Also, if $f=m-2x-3y-4\geq 0,$ then $m-3y-4\geq 2x.$ Finally, if $a=2m+3n-2x-3y\geq 0,$ then  $2m+3n-3y\geq 2x \geq 0,$ implying that $2m+3n\geq 3y.$ We join the first two inequalities to obtain $3y>6n+3m+6.$ We then join the inequality just obtained and the third inequality to see that $-6>3n+m.$ This is impossible since $n,m$ are non-negative, so such a system has no solution.
    \item[Case F:] Assume the statement $e_0 \wedge c_1$ holds true. We observe that if $c=m+3n-2x-3y-1<0,$ then $m+3n-3y-1<2x.$ Also, if $e=2n-2x-4y-4\geq 0,$ then $2n-4y-4\geq 2x.$ We join these two inequalities to obtain $2n-4y-4>m+3n-3y-1.$ If we solve for $m$ explicitly, we obtain that $-n-y-3>m,$ implying that such a system has no solutions.
\end{enumerate}
Utilizing these cases, we consider each $q$-multiplicity listed in Theorem \ref{thm:main}.
\begin{enumerate}
    \item [Case \RN{1}:] The necessary condition for $m_q(\lambda,\mu) = P-Q-R+S+T$ is given by
   \[ \state{\P}{\Q}{\R}{\S}{\T}.\]
   This reduces to
   \[
   a_0 \wedge b_0 \wedge c_0 \wedge d_0 \wedge e_0 \wedge f_0,
   \]
   and so $m_q(\lambda,\mu) = P-Q-R+S+T$ implies $
   a_0 \wedge b_0 \wedge c_0 \wedge d_0 \wedge e_0 \wedge f_0$.
   \item [Case \RN{2}:] The necessary condition for $m_q(\lambda,\mu) = P-Q-R+S$ is given by
   \[ \state{\P}{\Q}{\R}{\S}{\notT}.\]
   Since the logical statement must hold true and it contains $a_0 \wedge b_0 \wedge c_0 \wedge d_0 \wedge e_0$, it must be that $(f_1 \vee d_1)$ reduces to $f_1$. Otherwise, it would contain the contradiction $d_0 \wedge d_1$. Thus, there is only one possible way for the necessary condition for this case to be true. Therefore, $m_q(\lambda,\mu) = P-Q-R+S$ implies $a_0 \wedge b_0 \wedge c_0 \wedge d_0 \wedge e_0 \wedge f_1$.
   \item [Case \RN{3}:] The necessary condition for $m_q(\lambda,\mu) = P-Q-R+T$ is given by
   \[ \state{\P}{\Q}{\R}{\notS}{\T}.\]
    Since the logical statement must hold true and it contains $a_0 \wedge b_0 \wedge c_0 \wedge d_0 \wedge f_0$, it must be that $(c_1 \vee e_1)$ reduces to $e_1$. Otherwise, it would contain the contradiction $c_0 \wedge c_1$. Thus, there is only one possible way for the necessary condition for this case to be true. Therefore, $m_q(\lambda,\mu) = P-Q-R+S$ implies $a_0 \wedge b_0 \wedge c_0 \wedge d_0 \wedge e_1 \wedge f_0$.
   \item [Case \RN{4}:] The necessary condition for $m_q(\lambda,\mu) = P-Q-R$ is given by
   \[ \state{\P}{\Q}{\R}{\notS}{\notT}.\]
    Since the logical statement must hold true and it contains $a_0 \wedge b_0 \wedge c_0 \wedge d_0$, it must be that $(c_1 \vee e_1) \wedge(f_1 \vee d_1)$ reduces to $e_1 \wedge f_1$. Otherwise, it would contain the contradiction $d_0 \wedge d_1$ or $c_0 \wedge c_1$. Thus, there is only one possible way for the necessary condition for this case to be true. Therefore, $m_q(\lambda,\mu) = P-Q-R+S$ implies $a_0 \wedge b_0 \wedge c_0 \wedge d_0 \wedge e_1 \wedge f_1$.
   \item [Case \RN{5}:] The necessary condition for $m_q(\lambda,\mu) = P-Q$ is given by
   \[ \state{\P}{\Q}{\notR}{\notS}{\T}.\]
   Since the logical statement must hold true and it contains $a_0\wedge b_0 \wedge c_0$, it must be that $(a_1 \vee d_1) \wedge (c_1 \vee e_1)\wedge (f_1 \vee d_1)$ reduces to $d_1 \wedge e_1$ or $d_1 \wedge e_1 \wedge f_1$. Otherwise, it would contain the contradiction $a_0 \wedge a_1$ or $c_0 \wedge c_1$. Thus, there are two possible ways in which the necessary condition for this case can be true. However, if we consider the statement $a_0\wedge b_0 \wedge c_0 \wedge d_1 \wedge e_1 \wedge f_0$, it contains the statement $a_0\wedge f_0 \wedge d_1$, a contradiction given by Case E. Therefore, $m_q(\lambda, \mu) = P-Q$ implies $a_0 \wedge b_0\wedge c_0 \wedge d_1 \wedge e_1 \wedge f_1$.
   \item[Case \RN{6}:] The necessary condition for $m_q(\lambda,\mu) = P-R$ is given by
   \[ \state{\P}{\notQ}{\R}{\notS}{\notT}.\]
   Since the logical statement must hold true and it contains $a_0\wedge b_0 \wedge d_0$, it must be that $(c_1 \vee b_1) \wedge (c_1 \vee e_1) \wedge (f_1 \vee d_1)$ reduces to $c_1 \wedge f_1$ or $c_1 \wedge e_1 \wedge f_1$. Otherwise, it would contain the contradiction $b_0 \wedge b_1$  or $d_0 \wedge d_1$. Thus, there are two possible ways in which the necessary condition for this case can be true. However, if we consider the statement $a_0 \wedge b_0 \wedge c_1 \wedge d_0 \wedge e_0 \wedge f_1$, it contains the statement $e_0 \wedge c_1$, a contradiction given by Case F. Therefore, $m_q(\lambda,\mu) = P-R$ implies $a_0 \wedge b_0 \wedge c_1 \wedge d_0 \wedge e_1 \wedge f_1$.
   \item [Case \RN{7}:] The necessary condition for $m_q(\lambda,\mu) = P$ is given by
   \[ \state{\P}{\notQ}{\notR}{\notS}{\notT}.\]
   Since the logical statement must hold true and it contains $a_0\wedge b_0$, it must be that
   $(c_1 \vee b_1) \wedge (a_1 \vee d_1) \wedge (c_1 \vee e_1) \wedge (f_1 \vee d_1)$ reduces to $c_1 \wedge d_1$, $c_1\wedge d_1 \wedge e_1$, $c_1 \wedge d_1 \wedge f_1$, or $c_1 \wedge d_1 \wedge e_1 \wedge f_1$. Otherwise, it would contain the contradiction $a_0 \wedge a_1$ or $b_0\wedge b_1$. Thus, there are four possible ways in which the necessary condition for this case can be true. We list three of these possibilities and describe a contradiction arising from each possibility.
   \begin{center}
    \begin{tabular}{|l|l|}\hline
        \textit{Possible Logical Conditions} & \textit{Contradiction} \\\hline
        $(a_0\wedge b_0\wedge e_0\wedge f_0)\wedge (c_1 \wedge d_1)$&$a_0\wedge d_1 \wedge f_0$ (Case E)\\\hline
        $(a_0\wedge b_0\wedge  f_0)\wedge (c_1 \wedge d_1\wedge e_1)$&$a_0\wedge d_1 \wedge f_0$ (Case E)\\\hline
        $(a_0\wedge b_0\wedge  e_0)\wedge (c_1 \wedge d_1\wedge f_1)$&$e_0\wedge c_1$ (Case F)\\\hline
    \end{tabular}
    \end{center}
    Therefore, $m_q(\lambda,\mu) = P$ implies $a_0 \wedge b_0  
\wedge c_1 \wedge d_1 \wedge e_1 \wedge f_1$.
\item [Case \RN{8}:] Thus, we are left with the final case in which $m_q(\lambda,\mu)=0$.\qedhere
\end{enumerate}
\end{proof}

We now present some examples of computing weight $q$-multiplicities using our formulas.

\begin{example}\label{ex:1} If $\lambda$ is the highest root of $\mathfrak{g}_2$, i.e. $\lambda=3\a_1+2\a_2=\w_2$, and $\mu=0$, then by Theorem~\ref{thm:main} we have that $m=x=y=0$ and $n=1$ and, hence, 
$a=3$, $b=c=2$, $d=0$, $e=-1$, and $f=-4$. This implies that 
\[m_q(\lambda,\mu)=\wp_q(3\a_1+2\a_2)-\wp_q(2\a_1+2\a_2)-\wp_q(3\a_1).\]
By Proposition \ref{prop:G2qpartition} we note that 
\[
    \wp_q(3\a_1+2\a_2)=q (1 + 2 q + 2 q^2 + q^3 + q^4),\qquad
    \wp_q(2\a_1+2\a_2)=q^2 (2 + q + q^2),x\qquad\mbox{and}\qquad
    \wp_q(3\a_1)=q^3.
\]
Therefore
$m_q(\lambda,\mu)=q + q^5$,
which recovers a known result of Lusztig which shows that $m_q(\lambda,0)=\sum_{i=1}^{r}q^{e_i}$, where $\lambda$ is the highest root and $e_1,\ldots,e_r$ are the exponents of the corresponding simple Lie algebra of rank $r$ \cite{Lusztig}. In addition, note that $m(\lambda,\mu)=2$.
\end{example}

\begin{example}\label{ex:2} If $\lambda=3\w_2$ and $\mu=\w_1+2\w_2$, then by Theorem~\ref{thm:main} we have that $m=0$, $n=3$, $x=1$, $y=2$ and, hence, 
$a=1$, $b=1$, $c=0$, $d=-3$, $e=-4$, and $f=-12$. This implies that 
$m_q(\lambda,\mu)=\wp_q(\a_1+\a_2)-\wp_q(\a_2)$. By Proposition \ref{prop:G2qpartition} we note that \[\wp_q(\a_1+\a_2)=q (1 + q)\qquad\mbox{and}\qquad\wp_q(\a_2)=q.\]
Therefore $m_q(\lambda,\mu)=q^2$ and $m(\lambda,\mu)=1$. This recovers a special case of \cite[Theorem 6]{HRSS2019}.
\end{example}

We recall the following formulas for the value of Kostant's partition function for the exceptional Lie algebra $\mathfrak{g_2}$ given by Tarski. 

\begin{lemma}[Tarski p. 9-10 \cite{tarski}]\label{lem:tarski}
Let $m,n\in\mathbb{N}$.
\begin{enumerate}
    \item If $m\leq n$, then
    $\wp(m\a_1+n\a_2)=g(m)$
    \item If $n\leq m\leq \frac{3}{2}n$, then
    $\wp(m\a_1+n\a_2)=g(m)-h(m-n-1)$
    \item If $\frac{3}{2}n\leq m\leq 2n$, then $\wp(m\a_1+n\a_2)=h(n)-g(3n-m-1)+h(2n-m-2)$
    \item If $2n\leq m\leq 3n$, then $\wp(m\a_1+n\a_2)=h(m)-g(3n-m-1)$
    \item If $3n\leq m$, then $\wp(m\a_1+n\a_2)=h(n)$
\end{enumerate}
where for $k\geq -2$,
\begin{align}
    g(k)&=\begin{cases}
    \frac{1}{432}(k+6)(k^3+14k^2+54k+72)&\mbox{for $k\equiv 0\mod 6$}\\
    \frac{1}{432}(k+5)^2(k^2+10k+13)&\mbox{for $k\equiv 1\mod 6$}\\
    \frac{1}{432}(k+4)(k^3+16k^2+74k+68)&\mbox{for $k\equiv 2\mod 6$}\\
    \frac{1}{432}(k+3)^2(k+5)(k+9)&\mbox{for $k\equiv 3\mod 6$}\\
    \frac{1}{432}(k+2)(k+8)(k^2+10k+22)&\mbox{for $k\equiv 4\mod 6$}\\
    \frac{1}{432}(k+1)(k+5)(k+7)^2&\mbox{for $k\equiv 5\mod 6$}\\
    \end{cases}
\end{align}
and 
\begin{align}
h(k)&=\begin{cases}
\frac{1}{48}(k+2)(k+4)(k^2+6k+6)&\mbox{for $k$ even}\\
\frac{1}{48}(k+1)(k+3)^2(k+5)&\mbox{for $k$ odd.}\\
\end{cases}
\end{align}
\end{lemma}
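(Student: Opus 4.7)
The plan is to derive these formulas from Proposition~\ref{prop:G2qpartition} by specializing at $q=1$, which turns the nested polynomial sum into a lattice-point count: $\wp(m\alpha_1+n\alpha_2)$ is the number of quadruples $(i,j,k,l)\in\NN^4$ satisfying the stated nested box constraints. The five cases of the lemma correspond precisely to the behaviors of the nested $\min$ expressions as the ratio $m/n$ crosses the thresholds $1$, $3/2$, $2$, and $3$; in each regime one can determine which argument of each $\min$ is active and drop the $\min$ notation, replacing it by a single explicit upper bound.

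In each range, I would evaluate the summations from the inside out. Summing over $l$ contributes a linear count, and then successive sums over $k$, $j$, and $i$ build up polynomial expressions via the standard formulas $\sum_{i=0}^{N}i^{k}=\frac{1}{k+1}N^{k+1}+\cdots$. The case $m\le n$ should recover $g(m)$ directly (the bound on $i$ is $\lfloor m/3\rfloor$ throughout, producing a quartic in $m$); the case $3n\le m$ is symmetric and should yield $h(n)$ (the bound on $i$ saturates at $\lfloor n/2\rfloor$, producing a quartic in $n$); the intermediate ranges in parts (2)--(4) should produce the stated sums and differences $g(m)-h(m-n-1)$, $h(n)-g(3n-m-1)+h(2n-m-2)$, and $h(m)-g(3n-m-1)$, mirroring the inclusion-exclusion between the two active faces of the lattice polytope.

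The main obstacle is the bookkeeping of floor functions and residue classes. The six-fold split in the definition of $g(k)$ reflects the interaction of $\lfloor (m-3i)/3\rfloor$ with $\lfloor (m-3i-3j)/2\rfloor$ across the inner $i$ and $j$ sums, producing a quasi-polynomial of period $6$ in $m$; the two-fold parity split in $h$ comes from the surviving $\lfloor \cdot/2\rfloor$ on the $k$-index when the $i$-range is capped by $\lfloor n/2\rfloor$ rather than $\lfloor m/3\rfloor$. A cleaner alternative, avoiding the casework on floors, is to work directly with the rational generating function $\prod_{\alpha\in\Phi^+}(1-z_{1}^{a_\alpha}z_{2}^{b_\alpha})^{-1}$ and perform partial-fraction decomposition, then match coefficients against the claimed quasi-polynomials. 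Either approach requires substantial but routine algebra; checking the boundary values $m=n$, $m=3n/2$, $m=2n$, and $m=3n$ to confirm that adjacent cases agree on overlaps serves as an effective consistency check.
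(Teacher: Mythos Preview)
The paper does not prove this lemma at all: it is quoted directly from Tarski's work (the citation \cite{tarski}, pp.~9--10) and is used only as a black box in the examples following Theorem~\ref{thm:main}. So there is no ``paper's own proof'' to compare against.

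Your proposed derivation---specialize Proposition~\ref{prop:G2qpartition} at $q=1$ and unwind the nested $\min$'s according to the position of $m/n$ relative to $1,\tfrac32,2,3$---is a sound strategy for actually proving the result, and the paper itself acknowledges (in the Future Work section) that carrying through such computations from Proposition~\ref{prop:G2qpartition} is ``tedious and technical,'' which is why it is not done. Your identification of the period-$6$ quasipolynomial in $g$ with the interaction of $\lfloor\cdot/3\rfloor$ and $\lfloor\cdot/2\rfloor$, and of the parity split in $h$ with the residual $\lfloor\cdot/2\rfloor$ when the $n$-bound is active, is correct. The alternative generating-function/partial-fractions route you mention is essentially how Tarski's original derivation proceeds. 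Either way, what you have written is an outline rather than a proof; the substantive work is entirely in the algebra you defer.
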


We remark that one could instead use Lemma \ref{lem:tarski} along with Theorem \ref{thm:main} to compute weight multiplicities rather than setting $q=1$ in Proposition \ref{prop:G2qpartition} as we did in the above examples. We provide the details of these computations using our previous examples.

\begin{example}Following Example \ref{ex:1},
we let $\lambda=3\a_1+2\a_2=\w_2$, $\mu=0$, and by  Theorem~\ref{thm:main} we know
$m(\lambda,\mu)=\wp(3\a_1+2\a_2)-\wp(2\a_1+2\a_2)-\wp(3\a_1)$.
Using Lemma \ref{lem:tarski} parts (b), (a), and (e), respectively, we note that 
\begin{align*}
    \wp(3\a_1+2\a_2)&=g(3)-h(3-2-1)=g(3)-h(0)=\frac{1}{432}(6)^2(8)(12)-\frac{1}{48}(2)(4)(6)=7,\\
    \wp_q(2\a_1+2\a_2)&=g(2)=\frac{1}{432}(6)(2^3+16(2)^2+74(2)+68)=4,\mbox{ and }\\
    \wp_q(3\a_1)&=h(0)=1.
\end{align*}
Therefore,
$m(\lambda,\mu)=7-4-1=2$, as previously computed. 
\end{example}

\begin{example}Following Example \ref{ex:2},
we let $\lambda=3\w_2$, $\mu=\w_1+2\w_2$, and by  Theorem~\ref{thm:main} we know
$m(\lambda,\mu)=\wp(\a_1+\a_2)-\wp(\a_2)$.
Using Lemma \ref{lem:tarski} part (a) we note that 
\[
    \wp(\a_1+\a_2)=g(1)=\frac{1}{432}(6)^2(1^2+10(1)+13)=2\qquad\mbox{ and }\qquad
    \wp_q(\a_2)=g(0)=\frac{1}{432}(6)(72)=1.
\]
Therefore,
$m(\lambda,\mu)=2-1=1$, as previously computed.
\end{example}

\section{Revision of the \texorpdfstring{$q$}{q}-analog of Kostant's weight multiplicity for \texorpdfstring{$\mathfrak{sp}_4(\mathbb{C})$}{}}

\label{sec:correction}

Harris and Lauber considered the Lie algebra $\mathfrak{sp}_4(\mathbb{C})$ and gave a closed formula for the $q$-multiplicity formula. However, their partition function formula omitted an edge case, which resulted in a missing case in their work. The formula for $\wp_q(m \alpha_1 + n \alpha_2)$ given in \cite[Proposition 1.2]{harris2017weight} is correct, and we restate it here
\[\wp_q(m\alpha_1+n\alpha_2) = \sum_{i=0}^{\min(\left\lfloor\frac{m}{2}\right\rfloor, n)}\left(\sum_{j=\max(m-i,n)}^{m+n-2i}q^j\right),\]
where $m$ and $n$ are integers, $\alpha_1$ and $\alpha_2$ are the simple roots, and $\Phi^+=\{\a_1,\a_2,\a_1+\a_2,2\a_1+\a_2\}$ are the positive roots of the Lie algebra $\sp_4(\mathbb{C})$. The mistake occurs in Corollary 3.3 of \cite{harris2017weight}. We provide the corrected statement and its proof below. 

\begin{corollary}[Corrected Corollary 3.3 \cite{harris2017weight}] If $\mathfrak{g} = \mathfrak{sp}_4 (\mathbb{C})$ and  $m, n \in \mathbb{N}$, then
\setlength\extrarowheight{3pt}
\begin{align*}
\wp(m\alpha_1+n\alpha_2) &= \begin{cases}
\left(\left\lfloor\frac{m}{2}\right\rfloor + 1\right)\left(m - \left\lfloor\frac{m}{2}\right\rfloor + 1\right)&\mbox{if $n \geq m$}\\
\frac{2mn-m^2-n^2+m+n}{2} + \left\lfloor\frac{m}{2}\right\rfloor\left(m- \left\lfloor\frac{m}{2}\right\rfloor\right) + 1&\mbox{if $2n-1 > m > n$}\\
(\left\lfloor\frac{m}{2}\right\rfloor+1)(n-\frac{1}{2}\left\lfloor\frac{m}{2}\right\rfloor+1)&\mbox{if $2n>m\geq2n-1>n$}\\
\frac{(n+1)(n+2)}{2}&\mbox{if $m \geq 2n$}.\\
\end{cases}
\end{align*}
\setlength\extrarowheight{0pt}
\end{corollary}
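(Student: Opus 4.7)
The plan is to specialize the $q$-analog formula (stated just above the corollary) to $q = 1$, yielding
\[
\wp(m\alpha_1 + n\alpha_2) = \sum_{i=0}^{\min(\lfloor m/2 \rfloor,\, n)} \bigl(m + n - 2i - \max(m - i,\, n) + 1\bigr),
\]
and then to split into the four stated cases by determining, for each range of $m$ and $n$, which of $\lfloor m/2 \rfloor$ and $n$ attains the outer minimum and for which $i$ the quantity $m - i$ exceeds $n$.

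In the two extreme cases the expressions collapse quickly. When $n \geq m$ the outer limit is $\lfloor m/2 \rfloor$ and $m - i \leq m \leq n$ throughout, so the summand is simply $m - 2i + 1$; summing an arithmetic progression gives $(\lfloor m/2 \rfloor + 1)(m - \lfloor m/2 \rfloor + 1)$, the first line of the corollary. Dually, when $m \geq 2n$ the outer limit is $n$ and $m - i \geq m - n \geq n$ for every admissible $i$, producing $\sum_{i=0}^{n}(n - i + 1) = (n+1)(n+2)/2$, the last line.

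For the middle range $n < m < 2n$, the outer limit is again $\lfloor m/2 \rfloor$, but now $\max(m-i, n)$ equals $m - i$ for $0 \leq i \leq m - n$ and equals $n$ for $m - n < i \leq \lfloor m/2 \rfloor$. I would split the sum at $i = m - n$, evaluate the two resulting arithmetic progressions directly as $(m - n + 1)(3n - m + 2)/2$ and $(\lfloor m/2 \rfloor - m + n)(n - \lfloor m/2 \rfloor)$, then combine to obtain the second line of the corollary after elementary simplification.

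The main obstacle, and the precise point at which the original argument broke down, is the boundary subcase $m = 2n - 1$: here $\lfloor m/2 \rfloor = m - n$, so the second piece of the split is empty and the combined formula must be derived from the surviving first piece alone. In this situation $\wp = \sum_{i=0}^{n-1}(n - i + 1) = (n+1)(n+2)/2 - 1 = n(n+3)/2$, which matches $(\lfloor m/2 \rfloor + 1)(n - \tfrac{1}{2}\lfloor m/2 \rfloor + 1)$ upon substituting $\lfloor m/2 \rfloor = n - 1$. The careful bookkeeping here — in particular verifying that $m - n \leq \lfloor m/2 \rfloor$ holds precisely when $m \leq 2n$, and that equality $m - n = \lfloor m/2 \rfloor$ together with $m < 2n$ forces $m = 2n - 1$ — is the one delicate step; once it is pinned down, all four cases follow from the same underlying computation.
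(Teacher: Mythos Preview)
Your proposal is correct and follows essentially the same route as the paper's proof: set $q=1$ in the $q$-analog formula, then case-split on the relation between $m$ and $n$, determining in each range which bound governs the outer minimum and where $\max(m-i,n)$ (equivalently the paper's $\min(m-i,n)$) switches. The paper first rewrites the summand as $\min(m-i,n)-i+1$ before splitting, whereas you work directly with $\max$, but this is cosmetic; both arguments isolate the boundary case $m=2n-1$ via the equality $\lfloor m/2\rfloor=m-n$, which is exactly the point the original proof in \cite{harris2017weight} missed.
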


\begin{proof}
Setting $q=1$ into equation (4) we find that
\begin{align}\label{eq:cases}
\wp(m\a_1+n\a_2)&=\left(\sum_{i=0}^{\min(\left\lfloor\frac{m}{2}\right\rfloor, n)}\min(m-i,n)\right)
-\frac{1}{2}\min\left(\left\lfloor\frac{m}{2}\right\rfloor,n\right)\left(\min\left(\left\lfloor\frac{m}{2}\right\rfloor,n\right)+1\right) \nonumber \\
&\hspace{1in}+\min\left(\left\lfloor\frac{m}{2}\right\rfloor,n\right)+1.
\end{align}
We now consider each case individually. If $n \geq m$, then equation \eqref{eq:cases} simplifies to \[ \left(\left\lfloor\frac{m}{2}\right\rfloor+1\right)\left(m-\left\lfloor\frac{m}{2}\right\rfloor+1\right).\]
If $m \geq 2n$, then equation \eqref{eq:cases} simplifies to $\frac{(n+1)(n+2)}{2}.$ If $2n-1 > m > n$, then equation \eqref{eq:cases} yields
\begin{align}
\left(\sum_{i=0}^{\left\lfloor\frac{m}{2}\right\rfloor}\min(m-i,n)\right)
-\frac{\left\lfloor\frac{m}{2}\right\rfloor\left(\left\lfloor\frac{m}{2}\right\rfloor+1\right)}{2}
+\left\lfloor\frac{m}{2}\right\rfloor+1.\label{eq:firstterm}
\end{align}
Let us consider the first term of expression~\eqref{eq:firstterm}. Since $m<2n-1$ implies that $1<m-2m+2n,$ we have that $0<\left\lfloor\frac{m-2m+2n}{2}\right\rfloor.$ Since $\left\lfloor\frac{m-2m+2n}{2}\right\rfloor=\left\lfloor\frac{m}{2}\right\rfloor-m+n,$ we see that $m-n<\left\lfloor\frac{m}{2}\right\rfloor.$ Then, because we have $m>n$, and $0<m-n$ holds. We then have that $0<m-n<\left\lfloor\frac{m}{2}\right\rfloor.$ It follows that if $i\leq m-n,$ then $n\leq m-i$ and hence $\min(m-i,n)=n.$ If $i > m-n$, then $n > m-i$ and hence $\min(m-i, n) = m-i$. Thus,
\begin{align}
\sum_{i=0}^{\left\lfloor\frac{m}{2}\right\rfloor}\min(m-i,n)
&
= \frac{2mn-m^2-n^2+m+n}{2} + m\left\lfloor\frac{m}{2}\right\rfloor 
-\frac{\left\lfloor\frac{m}{2}\right\rfloor\left(\left\lfloor\frac{m}{2}\right\rfloor+1\right)}{2}.
\label{eq:last}
\end{align}
Substituting equation \eqref{eq:last} into equation \eqref{eq:firstterm} yields the desired result. If $2n>m\geq2n-1>n,$ then $2n-m\leq 1$ which implies that $\left\lfloor\frac{m-2m+2n}{2}\right\rfloor=\left\lfloor\frac{m}{2}\right\rfloor-m+n\leq 0$, so $\left\lfloor\frac{m}{2}\right\rfloor\leq m-n.$ Thus, for all $i$ it holds that $m-n\geq i,$ implying that $m-i\geq n$ and we obtain that
\begin{align}
\sum_{i=0}^{\left\lfloor\frac{m}{2}\right\rfloor}\min(m-i,n)
&
= n \left(\left\lfloor\frac{m}{2}\right\rfloor+1\right).
\label{eq:lastlast}
\end{align}
Substituting equation \eqref{eq:lastlast} into equation \eqref{eq:firstterm} yields the desired result.
\end{proof}

As a consequence of this correction to Corollary 3.3 of \cite{harris2017weight}, the following result replaces Corollary 4.1 in \cite{harris2017weight}.
\noindent \begin{corollary}[Corrected Corollary 4.1 \cite{harris2017weight}]
Let $\lambda = m\w_1 + n\w_2$ and $\mu = x\w_1 + y\w_2$ with $m,n,x,y \in \mathbb{N}:=\{0,1,2,\ldots\}$ be weights of $\sp_4(\mathbb{C})$ and define 
$a=m+n-x-y$,
$b= n-y+\frac{m-x}{2}$,
$c= n-x-y-1$, and
$d= -y-1+\frac{m-x}{2}.$   
Then
\begin{equation}
\label{mainq1}
m(\lambda, \mu) = \begin{cases}
P - Q - R&\mbox{if $a,b,c,d\in\mathbb{N}$}\\
P-Q&\mbox{if $a,b,c\in\mathbb{N}$ and $d\notin\mathbb{N}$}\\
P-R&\mbox{if $a,b,d\in\mathbb{N}$ and $c\notin\mathbb{N}$}\\
P&\mbox{if $a,b\in\mathbb{N}$ and $c,d\notin\mathbb{N}$}\\
0&\mbox{otherwise}
\end{cases}
\end{equation}
where
\begin{align*}
P &= \begin{cases}
\left(\left\lfloor\frac{a}{2}\right\rfloor + 1\right)\left(a - \left\lfloor\frac{a}{2}\right\rfloor + 1\right)&\mbox{if $b \geq a$}\\
\frac{(b+1)(b+2)}{2}&\mbox{if $a \geq 2b$}\\
\frac{2ab-a^2-b^2+a+b}{2} + \left\lfloor\frac{a}{2}\right\rfloor\left(a- \left\lfloor\frac{a}{2}\right\rfloor\right) + 1&\mbox{if $2b-1 > a > b$}\\
(\left\lfloor\frac{a}{2}\right\rfloor+1)(b-\frac{1}{2}\left\lfloor\frac{a}{2}\right\rfloor+1)&\mbox{if $2b>a\geq2b-1>b$,}\\
\end{cases}\\ 
Q &= \left\lfloor\frac{c+2}{2}\right\rfloor^2,\\
R &=\frac{(d+1)(d+2)}{2}.
\end{align*}
\end{corollary}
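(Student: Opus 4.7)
The plan is to re-derive the closed form for $m(\lambda,\mu)$ by running Harris and Lauber's original argument for Corollary~4.1 with the corrected Corollary~3.3 in place of its flawed predecessor. Kostant's weight multiplicity formula \eqref{eq:KWMF} is an alternating sum over the Weyl group of $\sp_4(\mathbb{C})$, and the case analysis determining which Weyl summands contribute nontrivially depends only on the signs of the $\alpha_1$- and $\alpha_2$-coefficients of $\sigma(\lambda+\rho)-(\mu+\rho)$---it is insensitive to the actual values of $\wp$. That portion of the original Harris--Lauber argument therefore transfers verbatim, so the piecewise structure of \eqref{mainq1} keyed to the signs of $a,b,c,d$ is already in place. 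What remains is to re-evaluate the three partition function values $P$, $Q$, and $R$ using the now-corrected Corollary~3.3.

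Next I would identify each of $P$, $Q$, $R$ with a specific value of $\wp$ at a weight whose $\alpha_1$- and $\alpha_2$-coefficients are linear in $a,b,c,d$, just as in the original paper. For $P=\wp(a\alpha_1+b\alpha_2)$, applying the corrected Corollary~3.3 yields directly all four subcases listed in the statement, the fourth being the previously omitted edge case $2b>a\geq 2b-1>b$. For $R$, the weight in question is $a\alpha_1+d\alpha_2$, and from the definitions I compute $a-2d=n+y+2\geq 2$, so $a\geq 2d$ holds automatically under $a,d\in\mathbb{N}$; the regime ``$m\geq 2n$'' of Corollary~3.3 then applies and yields $R=(d+1)(d+2)/2$ with no subcases. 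For $Q$, an analogous inequality check using the definitions and the hypotheses $m,n,x,y\in\mathbb{N}$ pins its evaluation into a single branch of the corrected Corollary~3.3, producing the stated simplification $Q=\lfloor (c+2)/2\rfloor^2$.

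The main obstacle in carrying this out will be precisely this case-pinning for $Q$ and $R$: verifying that the piecewise input to Corollary~3.3 collapses to a single branch under the joint hypotheses $a,b,c,d\in\mathbb{N}$ and $m,n,x,y\in\mathbb{N}$, and in particular confirming that the newly introduced edge case of Corollary~3.3 never enters the evaluation of $Q$ or $R$. This reduces to routine inequality manipulation in the spirit of Cases~A--D in the proof of Lemma~\ref{8 non trivial combos given by inequalites} and Cases~E--F in the proof of Theorem~\ref{thm:main}, but must be done carefully and separately for each of $Q$ and $R$. Once the applicable branches are identified, the corrected Corollary~4.1 follows by direct substitution into the appropriate case of \eqref{mainq1}.
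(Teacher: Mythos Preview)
Your plan is exactly the paper's own (implicit) argument: the paper offers no proof of this corollary, presenting it simply as the result of feeding the corrected Corollary~3.3 into the case analysis already established in \cite{harris2017weight}, which is precisely what you propose to do. Your additional observation---that $Q$ and $R$ each fall into a single branch of Corollary~3.3 (via $b-c=\tfrac{m+x}{2}+1>0$ and $a-2d=n+y+2\geq 2$) so that the new edge case touches only $P$---is correct and makes explicit what the paper leaves tacit.
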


\section{Future work}
Finding formulas for  Kostant's partition has recently been connected to counting multiplex juggling sequences \cite{juggling,HIO}. These bijections have been considered for all classical Lie algebras, but extending them to the exceptional Lie algebras, such as $\mathfrak{g}_2$, remains an open problem. 
For a second direction of research, we remark that one could consider giving explicit formulas for the $q$-analog of Kostant's partition function for $\mathfrak{g}_2$. This would require working through the expansion of Proposition \ref{prop:G2qpartition} using the coefficient constraints given by Tarski in Lemma \ref{lem:tarski}. We omitted such a computation because of its tedious and technical nature.

\section*{Acknowledgements}
This research was supported in part by the Alfred P. Sloan Foundation, the Mathematical Sciences Research Institute, and the National Science Foundation. We thank Rebecca Garcia for her many helpful conversations.
\bibliographystyle{plain}
\bibliography{references} 
\end{document}